\newtheorem{proposition}{Proposition}
\newenvironment{proof}[1][Proof]{\noindent\textbf{#1.} }{\ \rule{0.5em}{0.5em}}
\newcommand{\transpose}{{\mbox{\tiny T}}}
\newcommand{\cG}{{\mathcal{G}}}
\newcommand{\cS}{{\mathcal{S}}}
\newcommand{\cX}{{\mathcal{X}}}
\newcommand{\cO}{{\mathcal{O}}}
\newcommand{\cY}{{\mathcal{Y}}}
\newcommand{\bbe}{{\textbf{e}}}
\newcommand{\bx}{\textbf{x}}
\newcommand{\by}{\textbf{y}}
\newcommand{\ba}{\textbf{a}}
\newcommand{\bap}{\pmb{\alpha}} 
\newcommand{\bbt}{\pmb{\beta}}
\newcommand{\bbR}{\mathbb{R}}
\newif\ifnotes\notestrue
\def\htien#1{}
\begin{document}

\newcolumntype{C}{>{\centering\arraybackslash}p{4em}}

\title{\textbf{Outer Approximation and  Supper-modular Cuts for Constrained Assortment Optimization under Mixed-Logit Model}}
\author[]{Hoang Giang Pham}
\author[]{Tien Mai}
\affil[]{\it\small
School of Computing and Information Systems, Singapore Management University}
\date{}
\maketitle

\begin{abstract}
In this paper, we study the assortment optimization problem under the mixed-logit customer choice model. While assortment optimization has been a major topic in revenue management for decades, the mixed-logit model is considered one of the most general and flexible approaches for modeling and predicting customer purchasing behavior. Previous work has primarily relied on mixed-integer linear programming (MILP) or second-order cone (CONIC) reformulations, which allow for exact problem solving using off-the-shelf solvers. However, these approaches often suffer from weak continuous relaxations and are slow when solving large instances. Our research addresses the problem by focusing on components of the objective function that can be proven to be monotonically supper-modular and convex. This allows us to derive valid cuts to outer-approximate the nonlinear objective functions. We then demonstrate that these valid cuts can be incorporated into Cutting Plane or Branch-and-Cut methods to solve the problem exactly. Extensive experiments show that our approaches consistently outperform previous methods in terms of both solution quality and computation time.

\end{abstract}

{\bf Keywords:}  
Capacitated assortment problem, Discrete choice model, Outer-approximation, Supper-modular cuts, Cutting Plane, Branch-and-Cut

\noindent
\textbf{Notation:}
Boldface characters represent matrices (or vectors), and $a_i$ denotes the $i$-th element of vector $\ba$. We use $[m]$, for any $m\in \mathbb{N}$, to denote the set $\{1,\ldots,m\}$.

\section{Introduction}

Assortment planning in retail refers to the process of deciding which products to offer during a specific time period. It involves selecting the optimal mix of items, considering factors such as profitability, market share, and customer satisfaction. In the recent operations research literature, there has been increasing attention on assortment optimization problems that involve optimizing the product assortment to maximize revenue, as discussed by \cite{Kök2009} in their survey. To address these assortment optimization problems effectively, it’s crucial to create a model that accurately capture  customers’ purchasing behavior. Such a model should account for how product characteristics influence overall demand and consider customers’ tendencies to substitute between different products.

Our study addresses the assortment optimization problem under the mixed-logit model (MMNL), one of the most popular and flexible models in the discrete choice literature \citep{Trai03, McFa81}. It has been shown that the MMNL, along with the ranking preference and cross-nested logit models, can approximate arbitrarily closely any random utility maximization models \citep{McFadden2000, FosgBier09, aouad2018approximability}, making it one of the most appealing modeling approach to use in the context. The problem has been shown to be NP-hard by \cite{Bront2009} and \cite{Rusmevichientong2014}. In addition, \citep{Desir2022} demonstrates that the unconstrained assortment problem under the MMNL model is NP-hard to approximate within \(\mathcal{O}(1/n^{1-\delta})\), for any \(\delta > 0\) and fixed \(n\), where \(n\) represents the number of customer classes. Moreover, when \(n\) is not fixed, the assortment problem is NP-hard to approximate within any constant factor.

To solve the assortment problem to optimality, prior approaches primarily rely on mixed-integer linear programming (MILP) \citep{Bront2009, MENDEZDIAZ2014246}, or second-order cone (CONIC) reformulations \citep{Sen2018}, making it solvable by off-the-shelf solvers such as CPLEX or GUROBI. Unlike previous studies, our approach does not depend on MILP or CONIC reformulations. Instead, we leverage components of the objective function that are demonstrably super-modular and convex. We then incorporate super-modular and outer-approximation cuts into Cutting Plane (CP) or Branch-and-Cut (B\&C) procedures to efficiently solve the problem. It is worth noting that outer-approximation and/or super-modular cuts are typically used for objective functions that are provably convex and/or super-modular \citep{Duran1986,mai2020multicut,Ljubic2018outer}, which is not the case in the context of assortment optimization. Our work marks the first time such cuts are employed to solve the constrained assortment problem to optimality.



\paragraph{Our contributions:}
Our detailed contributions are presented as follows:
\begin{itemize}
    \item[(i)]To efficiently solve the assortment problem, we convert the maximization problem into a minimization one and explore non-linear components of the objective function that can be proven to be super-modular and convex. We then utilize two types of valid cuts—outer-approximation and super-modular cuts—that can be employed in CP or B\&C methods to outer-approximate the nonlinear objective function. Additionally, to quickly obtain a reasonably good solution useful for a warm start step of the CP or B\&C, we leverage this supermodularity property to demonstrate that a simple polynomial-time Greedy Heuristic can guarantee a \((1-1/e)\frac{r_{\min}}{r_{\max}}\) approximation solution to the assortment optimization problem, even when the number of customer classes is not constant, where \(r_{\min}\) and \(r_{\max}\) are the minimum and maximum product prices, respectively.
    \item[(ii)] We conduct extensive experiments using instances of various sizes, including those obtained from the state-of-the-art method \citep{Sen2018} and those generated with large number of customer classes. Our CP and B\&C are compared with the state-of-the-art approach, CONIC formulation, proposed in \cite{Sen2018}. The comparison results show that our approach significantly outperforms other baselines in terms of both solution quality and runtime. 
\end{itemize}
\paragraph{Paper Outlines:}
The paper is organized as follows. Section \ref{sec:review} presents a literature review. Section \ref{sec:problem} presents the problem description and the MILP and CONIC reformulations. Section \ref{sec:CPBC} describes our CP, B\&C and an approximation scheme for the assortment problem. Section \ref{sec:result} provides numerical experiments, and finally, Section \ref{sec:conclusion} concludes.

\section{Literature review}\label{sec:review}

In the context of assortment optimization, the goal is to select an appropriate subset of items from a larger set to offer to customers. This decision aims to maximize an objective (such as expected revenue) while considering customer preferences and their choice behavior. Specifically, it involves modeling customer substitution between products and understanding how product characteristics impact overall demand. The most widely employed model to capture customer behavior is multinomial logit (MNL) model. This model relies on a probabilistic framework to represent individual customer utilities. Essentially, it estimates the likelihood of a customer choosing a particular product from a set of alternatives based on their preferences and characteristics of the products. The pioneering work of assortment planing problem under MNL model is presented by \cite{ryzin1999} and several follow-up studies are introduced by \cite{Mahajan2001}, \cite{Chong2001}, \cite{Cachon2005}, \cite{rusmevichientong2010dynamic}, \cite{Rusmevichientong2012}, \cite{Topaloglu2013}, \cite{LO2019546}, and \cite{Liu2020}.

Despite its popularity, the MNL model is restricted due to its independence from the Independence from Irrelevant Alternatives property, which implies that the ratio of the probabilities of choosing two products is independent of any other alternative. Furthermore, the total market share of an assortment and the substitution rates within that assortment cannot be independently defined \citep{Kok2007}. These properties are shown to not hold in many practical contexts, and a partial remedy for them has been made based on
an extension of the MNL model, called the nested logit model. Recently, \cite{Davis2014}, \cite{Gallego2014}, \cite{Li2015}, \cite{ALFANDARI2021830} study assortment optimization under variants of the nested logit model. Another extension of the MNL is the MMNL model introduced by \cite{BOYD1980367} and \cite{CARDELL1980423}. This generation of the MNL model can overcome the limitations mentioned above and approximate arbitrarily close to any random utility maximization models, as observed by \cite{McFadden2000}. During the last decade, the assortment planning under the MMNL model has been an interesting topic in the operation research and management science such as \cite{Rusmevichientong2014}, \cite{MENDEZDIAZ2014246}, \cite{Feldman2015}, \cite{Sen2018}, and \cite{Desir2022}.

In this paper, we are interested in the constrained assortment problem under the MMNL model, with linear constraints on the set of products in the assortment. These constraints are often referred to as \textit{capacity constraints}. Although  assortment optimization can be polynomially solvable under the MNL model for both capacitated or uncapacitated scenarios \citep{Talluri2004,rusmevichientong2010dynamic,Sumida2021}, this is not the case for assortment optimization under the MMNL and other models. In \cite{Gallego2014} and \cite{Feldman2015}, several constant factor approximation algorithms for capacitated assortment problem under nested logit model are introduced. \cite{CHEN2019} give a near-optimal algorithm for the same problem containing a single constraint across all products. 
Under the Markov chain choice model \citep{Zhang2005,Blanchet2016} where transitions within the Markov chain represent substitutions, \cite{Desir2020} demonstrate that the capacitated assortment problem remains APX-hard, meaning it is NP-hard to approximate this assortment optimization problem to within any constant factor less than 1.
Under MMNL model, \cite{MENDEZDIAZ2014246} design and test a branch-and-cut algorithm for both capacitated and uncapacitated versions. \cite{Sen2018} formulate the problem as a CONIC quadratic mixed-integer program that can directly solves to optimal large instances of capacitated version. Recently, \cite{Desir2022} present near-optimal algorithms for the capacity constrained assortment optimization problem under a large class of parametric choice models including the MMNL, Markov chain, nested logit, and $d$-level nested logit choice models.

Our work is also related to a line of research on competitive facility location where customer behavior is predicted by discrete choice models \citep{Benati2002, Ljubic2018outer, mai2020multicut, Dam2021submodularity}. In these problems, the objective functions are proven to be submodular and/or convex, so outer-approximation and submodular cuts have been actively applied to solve the facility location problems. As mentioned earlier, in assortment optimization, the objective function under the MMNL does not exhibit such submodularity or concavity properties, and to the best of our knowledge, our work marks the first time such properties are leveraged to optimally solve the assortment problem.

\section{Constrained Assortment Problem under the MMNL Model}\label{sec:problem}
\subsection{Problem Formulation}
Let $[m]$ represent the set of available products and $S$ denote an assortment, which is a subset of the products offered by the retailer. The traditional MNL  model is based on the utility a customer derives from purchasing a product. This utility consists of two components: $U_j = u_j + \epsilon_j$ for any product $j \in [m]$, where $u_j$ is the deterministic component and $\epsilon_j$ is a random component assumed to follow a Gumbel distribution with a mean of zero and variance of $\mu^2\pi^2/6$. Let $p_j$ denote the unit price of product $j$ and $v_0$ represent the no-purchase option. Under the MNL model, the probability that a customer purchases product $j \in [m]$ from an assortment $S$ is given by:
\[
\mathcal{P}_j(S) = \frac{v_j}{v_0 + \sum_{k \in S}v_k}
\]
The MMNL model extends the MNL model by assuming that customers belong to $n$ different classes. Let $\rho_i$ be the probability that the demand originates from customer class $i \in [n]$. Denote $v_{ij}$ as the customer preference for customer class $i \in [n]$ and product $j \in [m]$. For any customer class $i \in [n]$, let $v_{i0}$ be the non-purchase preference and $r_{ij}$ be the revenue from product $j \in [m]$ for customer class $i \in [n]$. The expected revenue for a given assortment $S$ can be expressed as a sum of MNL-based revenues:
\[
\sum_{i \in [n]} \rho_i\left [\frac{\sum_{j\in S} r_{ij} v_{ij}}{v_{i0} + \sum_{j\in S}v_{ij}}\right ]
\]
The assortment optimization problem then can be formulated as:
\begin{equation}\label{prob:Assort}\tag{\sf Assort}
    \max_{S} \left\{ F(S) = \sum_{i \in [n]} \rho_i\left [\frac{\sum_{j\in S} r_{ij} v_{ij}}{v_{i0} + \sum_{j\in S}v_{ij}}\right ] \right\}
\end{equation}
It is convenient to formulate \eqref{prob:Assort} as a binary program. To achieve this, we define the decision variables $\bx$ such that $\bx_j$ equals 1 if product $j$ is included in the assortment, and 0 otherwise. The assortment problem can then be formulated as follows:
\[
\max_{\bx\in \{0,1\}^{m}} \quad \left\{ F(\bx) = \sum_{i\in [n]} \rho_i \left [\frac{\sum_{j\in [m]} r_{ij}v_{ij}x_j}{v_{i0} + \sum_{j\in [m]} v_{ij}x_j} \right ]\right\}
\]
It is known that the above problem is NP-hard for $n\geq 2$, even without any constraint \citep{rusmevichientong2014assortment}. Our research focuses on the capacitated assortment problem, where the above assortment problem is extended by adding resource constraints \citep{Sen2018, Desir2022}. Let $[k]$ be the set of resources that may restrict the assortment, $\beta_{kj}$ be the required amount of resource used by product $j \in [m]$ and $\alpha_{k}$ denote the amount of resource $k \in [k]$ available. The capacitated assortment problem is presented as follows:
\begin{align}\label{prob:CAP}\tag{\sf CAP}
    \max &\quad \left\{F(\bx) = \sum_{i\in [n]} \rho_i \left [\frac{\sum_{j\in [m]} r_{ij} v_{ij} x_j}{v_{i0} + \sum_{j\in [m]}v_{ij}x_j} \right ]\right\}\\
    \text{s.t} &\quad \sum_{j \in [m]} \beta_{kj} x_j \leq \alpha_k \qquad \forall k \in [k] \nonumber \\
    &\quad x_j \in \{0,1\} \qquad \forall j \in [m]\nonumber
\end{align}

\subsection{The Traditional MILP and CONIC Reformulations}\label{sec:formulation}
In this section, we  revisit the classic MILP   and CONIC  reformulations for \eqref{prob:CAP}. To this end, let us first reformulate \eqref{prob:CAP} as a minimization problem. 
Define \( r_i = \max_{j \in [m]} r_{ij} \) for all \( i \in [n] \), and \( r'_{ij} = r_i - r_{ij} \) for all \( i \in [n] \) and \( j \in [m] \). For notational simplicity, let \(\cX\) represent the feasible set of \eqref{prob:CAP}. The objective function in \eqref{prob:CAP} can be written as follows:
\[
F(\bx) = \sum_{i \in [n]} \rho_i r_i -  \sum_{i\in [n]} \rho_i \left [ \frac{r_i v_{i0} + \sum_{j\in [m]} r'_{ij} v_{ij} x_j}{v_{i0} + \sum_{j\in [m]} v_{ij} x_j} \right ]
\]
Because the first component of $F(\bx)$ is a constant, we then can convert \eqref{prob:CAP}  as the following minimization problem:
\begin{equation}\label{prob:CAP-min}\tag{\sf CAP-MIN}
    \min_{\bx\in \cX} \left\{ G(\bx) =  \sum_{i\in [n]}\rho_i \left [\frac{\sum_{j\in [m]} r'_{ij} v_{ij} x_j}{v_{i0} + \sum_{j\in [m]}v_{ij}x_j} \right ] + \sum_{i\in [n]} \frac{\rho_i r_i v_{i0}}{v_{i0} + \sum_{j\in [m]}v_{ij}x_j}\right\}
\end{equation}

\paragraph{MILP  Reformulation}
By letting  $y_i = 1 / (v_{i0} + \sum_{j \in [m]} v_{ij}x_j)$ for any $i \in [n]$, we can formulate \eqref{prob:CAP-min} as the following  mixed-integer bilinear program:
\begin{align}\label{prob:BiBC}\tag{\sf Bilinear}
    \min &\quad \sum_{i\in [n]}\sum_{j\in [m]}\rho_i r'_{ij} v_{ij}x_j y_i + \sum_{i\in [n]} \rho_i r_i v_{i0}y_i \notag\\
    \text{s.t.} &\quad \sum_{j \in [m]} \beta_{kj} x_j \leq \alpha_k \qquad \forall k \in [k] \label{eq:capacity}\\
    &\quad v_{i0}y_i + \sum_{j \in [m]}v_{ij}x_jy_i \geq 1 \qquad \forall i \in [n] \label{eq:relation_x_y}\\
    &\quad x_j \in \{0,1\} \qquad \forall j \in [m] \label{eq:x}\\
    &\quad y_i \geq 0 \qquad \forall i \in [n] \label{eq:y_geq_0}
\end{align}
The bilinear term $x_jy_i$ in the formulation can be linearized using additional variables $\theta_{ij} = x_jy_i$ and big-M \citep{Bront2009} constraints:
\begin{align}
    & v_{i0}\theta_{ij} \leq x_j \qquad \forall i \in [n], j \in [m] \label{eq:linear2}\\
    & \theta_{ij} \leq y_i \qquad \forall i \in [n], j \in [m]\label{eq:linear3}\\
    & v_{i0}(y_i - \theta_{ij}) \leq 1 - x_j \qquad \forall i \in [n], j \in [m] \label{eq:linear1}\\
    & \theta_{ij} \geq 0 \qquad \forall i \in [n], j \in [m] \label{eq:theta_geq_0}
\end{align}
It is widely known that big-M techniques for linearizing bilinear terms often result in weak continuous relaxations. In \cite{Sen2018}, the authors propose strengthening such linearizations with McCormick inequalities by using conditional bounds on \( y_i \) for all \( i \in [n] \). Specifically, let us first define the conditional function  \(\phi_{ij}(b)\), for any $i\in [n], j\in [m]$  and $b\in \{0,1\}$ as: 
\[
\phi_{ij}(b) =  \min_{\bx \in \cX,~ x_j = b} \left\{\frac{1}{v_{i0}+\sum_{j\in [m]} v_{ij}x_j}\right\}
\]
The value of \(\phi_{ij}(b)\) can be obtained by solving a linear problem: \(\max_{\bx \in \cX, x_j = b} \sum_{j \in [m]} v_{ij} x_j\), which can be done efficiently under cardinality constraints. In fact, \(\phi_{ij}(b)\) provides a tight lower bound for \(y_i\) conditional on \(x_j = b\). Using these bounds, the bilinear terms \(\theta_{ij} = y_i x_j\) can be linearized using the following McCormick inequalities:
\begin{align}
    & \theta_{ij} \leq \frac{1}{v_{i0} + v_{ij}} x_j \qquad \forall i \in [n], j \in [m] \label{eq:sl4}\\
    & \theta_{ij} \leq y_i - \phi_{ij}(0) (1 - x_j) \qquad \forall i \in [n], j \in [m] \label{eq:sl0}\\
    & \theta_{ij} \geq \phi_{ij}(1) x_j \qquad \forall i \in [n], j \in [m]\label{eq:sl1}\\
    & \theta_{ij} \geq y_i - \frac{1}{v_{i0}} (1 - x_j) \qquad \forall i \in [n], j \in [m] \label{eq:sl2}
\end{align}
Note that constraints (\ref{eq:sl2}) are equivalent to constraints (\ref{eq:linear1}) and constraints (\ref{eq:sl4}), \eqref{eq:sl0} are stronger than constraints (\ref{eq:linear2}), \eqref{eq:linear3}, respectively, thanks  to the tighter bounds $\phi_{ij}$, for all $i\in [n], j\in [m]$. The linearization and the McCormick inequalities lead to the following MILP formulation:
\begin{align}\label{prob:MILPBC}\tag{\sf MILP}
    \min &\quad \sum_{i\in [n]}\sum_{j\in [m]}\rho_i r'_{ij} v_{ij}\theta_{ij} + \sum_{i\in [n]} \rho_i r_i v_{i0}y_i\\
    \text{s.t.} & \quad \sum_{j \in [m]} \beta_{kj} x_j \leq \alpha_k \qquad \forall k \in [k] \nonumber\\
    & \quad \theta_{ij} \leq \frac{1}{v_{i0} + v_{ij}} x_j \qquad \forall i \in [n], j \in [m] \nonumber\\
    & \quad \theta_{ij} \geq \phi_{ij}(1) x_j \qquad \forall i \in [n], j \in [m]\nonumber\\
    & \quad \theta_{ij} \leq y_i - \phi_{ij}(0) (1 - x_j) \qquad \forall i \in [n], j \in [m] \nonumber\\
    & \quad \theta_{ij} \geq y_i - \frac{1}{v_{i0}} (1 - x_j) \qquad \forall i \in [n], j \in [m] \nonumber \\
    & \quad v_{i0}y_i + \sum_{j \in [m]}v_{ij}\theta_{ij} \geq 1 \qquad \forall i \in [n] \label{eq:define_y_milp_rexlax}\\
    & \quad x_j \in \{0,1\} \qquad \forall j \in [m]\nonumber\\
    & \quad y_i, \theta_{ij} \geq 0 \qquad \forall i \in [n], j \in [m]\nonumber
\end{align}

\paragraph{CONIC Reformulation}


The MILP reformulation mentioned above, even with McCormick inequalities, is reported to have poor performance due to weak continuous relaxation. To address this issue, the CONIC reformulation proposed in \citep{Sen2018} leverages rotated cone constraints of the form \(x_1^2 \leq x_2 x_3\) with \(x_1, x_2, x_3 \geq 0\), and reformulates \eqref{prob:CAP-min} as a mixed-integer second-order cone program, which can be readily solved by an off-the-shelf solver. Specifically, by letting \(z_i = v_{i0} + \sum_{j \in [m]} v_{ij} x_j\), the rotated cone form of \(y_i = 1/(v_{i0} + \sum_{j \in [m]} v_{ij} x_j)\) and \(\theta_{ij} = x_j y_i\) can be presented as follows:
\begin{align}
    &y_iz_i \geq 1 \qquad \forall i \in [n] \label{eq:cone_yz}\\
    & \theta_{ij}z_i \geq x_j^2 \qquad \forall i \in [n], j \in [m] \label{eq:cone_thetaz}
\end{align}
The CONIC formulation introduced by \cite{Sen2018} can be written as follows:
\begin{align}\label{prob:Conic}\tag{\sf CONIC}
    \min &\quad \sum_{i\in [n]}\sum_{j\in [m]}\rho_i r'_{ij} v_{ij}\theta_{ij} + \sum_{i\in [n]} \rho_i r_i v_{i0}y_i\\
    \text{s.t.} &\quad \sum_{j \in [m]} \beta_{kj} x_j \leq \alpha_k \qquad \forall k \in [k] \nonumber\\
    &\quad z_i = v_{i0} + \sum_{j \in [m]}v_{ij}x_j \qquad \forall i \in [n] \nonumber\\
    & \quad y_iz_i \geq 1 \qquad \forall i \in [n] \nonumber\\
    & \quad \theta_{ij}z_i \geq x_j^2 \qquad \forall i \in [n], j \in [m] \nonumber\\
    & \quad v_{i0}y_i + \sum_{j \in [m]}v_{ij}\theta_{ij} \geq 1 \qquad \forall i \in [n] \label{ctr:mc-0}\\
    & \quad \theta_{ij} \leq \frac{1}{v_{i0} + v_{ij}} x_j \qquad \forall i \in [n], j \in [m] \label{ctr:mc-1}\\
    & \quad \theta_{ij} \geq \phi_{ij}(1) x_j \qquad \forall i \in [n], j \in [m]\label{ctr:mc-2}\\
    & \quad \theta_{ij} \leq y_i - \phi_{ij}(0) (1 - x_j) \qquad \forall i \in [n], j \in [m] \label{ctr:mc-3}\\
    & \quad \theta_{ij} \geq y_i - \frac{1}{v_{i0}} (1 - x_j) \qquad \forall i \in [n], j \in [m] \label{ctr:mc-4}\\
    & \quad x_j \in \{0,1\} \qquad \forall j \in [m]\nonumber\\
    & \quad y_i, z_i, \theta_{ij} \geq 0 \qquad \forall i \in [n], j \in [m]\nonumber
\end{align}
 It is important to note that Constraints \eqref{ctr:mc-0}-\eqref{ctr:mc-4} are McCormick inequalities and are redundant in the formulation. However, \cite{Sen2018} suggest including these constraints to enhance the continuous relaxations of the mixed-integer nonlinear  program.

\section{Outer Approximation and Supper-modular Cuts}\label{sec:CPBC}
In this section, we examine components of the objective function in \eqref{prob:CAP-min} that can be proven to be convex and supper-modular. This analysis will enable us to derive valid cuts that can be incorporated into a CP or B\&C procedure to solve the problem.

\subsection{Supper-modularity and Convexity}
For each $i\in [n]$ and $j\in [m]$, let us denote 
\begin{align}
    \Phi_i(\bx)  &= 1 / (v_{i0} + \sum_{j \in [m]} v_{ij}x_j) \nonumber\\
    \Psi_i(\bx) &= v_{i0} + \sum_{j \in [m]}v_{ij}x_j
\end{align}
We also define $\Phi_i()$ as a  subset function if it takes input as an assortment solution:
\begin{align}
    \Phi_i(S)  &= 1 / (v_{i0} + \sum_{j \in S} v_{ij}) \nonumber
\end{align}
We start by stating the following  proposition:
\begin{proposition}
For any $i \in [n]$, $\Phi_i(\bx)$ is convex in $\bx$ and $\Phi_i(S)$ is monotonically decreasing and supper-modular.
\end{proposition}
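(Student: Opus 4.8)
The plan is to dispatch the three assertions in turn, since each rests on a different elementary property of the scalar map $g(t) = 1/t$. For convexity of $\Phi_i(\bx)$, I would write $\Phi_i = g \circ \Psi_i$, observe that $\Psi_i(\bx) = v_{i0} + \sum_{j\in[m]} v_{ij} x_j$ is affine in $\bx$ and that $g$ is convex on $(0,\infty)$, and invoke the standard fact that the composition of a convex function with an affine map is convex. The only hypothesis to check is that $\Psi_i$ stays strictly positive on the relevant domain, which holds because $v_{i0} > 0$ and $v_{ij}, x_j \geq 0$. If one prefers an explicit certificate, the Hessian is $\nabla^2 \Phi_i(\bx) = \frac{2}{\Psi_i(\bx)^3}\, \bv_i \bv_i^{\transpose}$ with $\bv_i = (v_{i1},\dots,v_{im})^{\transpose}$, which is a positive multiple of a rank-one positive semidefinite matrix, hence PSD.

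Monotonicity of the set function $\Phi_i(S)$ is immediate: for $S \subseteq T$, enlarging the index set only adds nonnegative terms $v_{ij}$ to the denominator $v_{i0} + \sum_{j\in S} v_{ij}$, so $\Phi_i(T) \leq \Phi_i(S)$, i.e.\ $\Phi_i$ is nonincreasing in $S$.

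The supermodularity claim is the only part that is not purely mechanical, so I would treat it as the crux. I would verify the defining inequality $\Phi_i(S\cup\{j\}) - \Phi_i(S) \leq \Phi_i(T\cup\{j\}) - \Phi_i(T)$ for all $S \subseteq T$ and $j \notin T$. Writing $a = \Psi_i(S)$, $b = \Psi_i(T)$ and $w = v_{ij}$, monotonicity of $\Psi_i$ gives $0 < a \leq b$ and $w \geq 0$, and each marginal difference equals $-w/\big(t(t+w)\big)$ evaluated at $t = a$ or $t = b$. Since $t \mapsto t(t+w)$ is increasing on $(0,\infty)$, the quantity $-w/\big(t(t+w)\big)$ is nondecreasing in $t$, which yields the inequality. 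The step to handle with care is the sign reversal: $1/\big(a(a+w)\big) \geq 1/\big(b(b+w)\big)$ must be multiplied by the nonpositive factor $-w$, flipping the direction to give exactly the supermodular inequality. Structurally this is an instance of the general principle that composing a convex scalar function with a nonnegative modular set function produces a supermodular function, since convexity of $g$ makes $t \mapsto g(t+w) - g(t)$ nondecreasing; I expect the cleanest write-up to state that principle and then specialize it to $g(t)=1/t$ and the modular weight $w(S) = \sum_{j\in S} v_{ij}$.
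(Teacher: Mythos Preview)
Your proposal is correct and follows essentially the same route as the paper: convexity via composition of $g(t)=1/t$ with the affine map $\Psi_i$, and supermodularity via the explicit marginal-difference computation $\Phi_i(S\cup\{j\})-\Phi_i(S)=-v_{ij}/\big(\Psi_i(S)\,\Psi_i(S\cup\{j\})\big)$ together with the monotonicity of the denominator in $S$. You are slightly more thorough than the paper in that you also supply the Hessian certificate, spell out the monotonicity argument, and situate the supermodularity as an instance of the convex-composed-with-modular principle, none of which the paper includes.
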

The proposition can be easily verified by noting that \((v_{i0} + \sum_{j \in [m]} v_{ij} x_j)\) is linear in \(\bx\), and the function \(f(t) = 1/t\) is convex in \(t\). As a result, \(\Phi_i(\bx)\) is convex in \(\bx\). The submodularity can also be verified by observing that, for any subsets \(A \subset B \subset [m]\) and any \(j^* \in [m] \setminus B\), we have the following inequalities:
\begin{align}
    \Phi_i(A) - \Phi_i(A \cup \{j^*\}) &= \frac{1}{v_{i0} + \sum_{j \in A} v_{ij}} - \frac{1}{v_{i0} + \sum_{j \in A \cup \{j^*\}} v_{ij}} \nonumber\\
    &= \frac{v_{ij^*}}{(v_{i0} + \sum_{j \in A} v_{ij})(v_{i0} + \sum_{j \in A \cup \{j^*\}} v_{ij})} \nonumber\\
    &\geq \frac{v_{ij^*}}{(v_{i0} + \sum_{j \in B} v_{ij})(v_{i0} + \sum_{j \in B \cup \{j^*\}} v_{ij})} \nonumber\\
    &= \Phi_i(B) - \Phi_i(B \cup \{j^*\}),
\end{align}
which directly implies the supper-modularity.

The convexity then implies that, for any solution candidate $\Bar{\bx} \in \{0,1\}^m$, the following  inequalities are valid for any $\bx \in \cX$:
\begin{align}
     \Phi_i(\bx) \geq \sum_{j \in [m]}\frac{-v_{ij}}{\Psi_i(\Bar{\bx})^2}(x_j - \Bar{x}_j) + \Phi_i(\Bar{\bx}) \qquad \forall i \in [n] \label{eq:oa_cut}
\end{align}
Moreover, since $\Phi_i(S)$ is supper-modular, the following inequality is valid for any $\Bar{S}\subset [m]$:
\begin{align}
    \Phi_i(S) &\geq \sum_{k\in [m]\backslash \Bar{S}\cap S}\psi^i_{k}(\Bar{S}) - \sum_{k\in \Bar{S}\backslash S} \psi_k^i([m]-k) + \Phi_i(\Bar{S}) \notag\\
     \Phi_i(S) &\geq \sum_{k\in [m]\backslash \Bar{S}\cap S}\psi^i_{k}(\varnothing) - \sum_{k\in \Bar{S}\backslash S} \psi_k^i(\Bar{S}-k) + \Phi_i(\Bar{S}) \notag
\end{align}
where $\psi^i_k(S) = \phi_i(S+k) - \phi_i(S)$. These can be transformed into inequalities in the $\bx$ space as:
\begin{align}
      \Phi_i(\bx) &\geq \sum_{k\in [m]}\frac{v_{ik}(\Bar{x}_k - 1)}{\Psi_i(\Bar{\bx})\Psi_i(\Bar{\bx}+\bbe_k)}x_k + \sum_{k\in [m]} \frac{v_{ik}\Bar{x}_k}{\Psi_i(\bbe)\Psi_i(\bbe-\bbe_k)}(1-x_k) + \Phi_i(\Bar{\bx}) \label{eq:supper-modularcut1}\\
      \Phi_i(\bx) &\geq \sum_{k\in [m]}\frac{v_{ik}(\Bar{x}_k - 1)}{\Psi_i(\textbf{0})\Psi_i(\textbf{0}+\bbe_k)}x_k + \sum_{k\in [m]} \frac{v_{ik}\Bar{\bx}_k}{\Psi_i(\Bar{\bx})\Psi_i(\Bar{\bx}-\bbe_k)}(1-x_k) + \Phi_i(\Bar{\bx}) \label{eq:supper-modularcut2}
\end{align}
for any $\bx\in \cX$, where $\bbe$ is an all-one vector of size $m$, $\bbe_k$ is a vector of size $m$ with zero elements except the $k$-th element which takes a value of 1, and $\textbf{0}$ is an all-zero vector of size $m$.

\subsection{Cutting Plane and Branch-and-Cut Approaches}
Both CP and B\&C procedures work by defining a master problem, which can be solved quickly using an off-the-shelf solver. At each step, valid cuts are added to the master problem to outer-approximate the nonlinear terms, and the master problem is repeatedly solved until an optimal solution is found or a stopping condition is met. In our context, the entire objective function is neither convex nor submodular, so typical CP or B\&C approaches are not directly applicable. To utilize the valid cuts discussed above, let us rewrite \eqref{prob:CAP-min} as the following mixed-integer nonlinear program:
\begin{align}
    \min_{\bx\in \cX} &\quad \left\{\sum_{i\in [n]}\sum_{j\in [m]}\rho_i r'_{ij} v_{ij}x_j \Phi_i(\bx) + \sum_{i\in [n]} \rho_i r_i v_{i0}\Phi_i(\bx) \right\}\label{prob-1}
\end{align}
Since all the terms \(\rho_i r'_{ij} v_{ij} x_j\) and \(\rho_i r_i v_{i0}\) are non-negative for all \(i \in [n]\) and \(j \in [m]\), we can set \(y_i = \Phi_i(\bx)\) and reformulate \eqref{prob-1} as:
\begin{align}\label{prob-2}\tag{\sf MINLP}
    \min &\quad \sum_{i \in [n]} \sum_{j \in [m]} \rho_i r'_{ij} v_{ij} x_j y_i + \sum_{i \in [n]} \rho_i r_i v_{i0} y_i \\
    \text{s.t.} & \quad y_i \geq \Phi_i(\bx),~\forall i \in [n] \label{ctr:Phi-y} \\
    &\quad \bx \in \cX, \by \in \bbR^n_+
\end{align}
To support the use of the outer-approximation and submodular cuts mentioned above, the following proposition states that the mixed-integer nonlinear program in \eqref{prob-2} is equivalent to a mixed-integer bilinear program where the nonlinear constraints \eqref{ctr:Phi-y} are replaced by a set of valid cuts (outer-approximation or submodular cuts).

\begin{proposition}\label{prop:MIBLP-equiv}
    The MINLP is equivalent to the following mixed-integer bilinear program:
    \begin{align}\label{prob-3}\tag{\sf MIBLP}
        \min &\quad \sum_{i \in [n]} \sum_{j \in [m]} \rho_i r'_{ij} v_{ij} x_j y_i + \sum_{i \in [n]} \rho_i r_i v_{i0} y_i \\
        \text{s.t.} & \quad y_i \geq (\bap^i_{\overline{\bx}})^\transpose \bx + \bbt^i_{\overline{\bx}},~\forall \overline{\bx} \in \cX,~i \in [n] \label{ctr:Phi-y-cut} \\
        &\quad \bx \in \cX, \by \in \bbR^n_+
    \end{align}
where \(y_i \geq (\bap^i_{\overline{\bx}})^\transpose \bx + \bbt^i_{\overline{\bx}}\) are outer-approximation or submodular cuts, constructed by generating valid inequalities of the forms in \eqref{eq:oa_cut}, \eqref{eq:supper-modularcut1}, and \eqref{eq:supper-modularcut2} at point \(\overline{\bx}\).
\end{proposition}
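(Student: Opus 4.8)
The plan is to prove the stronger statement that \eqref{prob-2} and \eqref{prob-3} have \emph{identical} objective functions and \emph{identical} feasible sets, from which equivalence (same optimal value, same optimal solution set) is immediate. Since the two objectives coincide verbatim, the whole argument reduces to showing the feasible regions agree. Moreover, because the variables $y_i$ are separable across $i \in [n]$ — each $y_i$ is constrained in \eqref{prob-2} only through $y_i \geq \Phi_i(\bx)$ and in \eqref{prob-3} only through the cuts generated from $\Phi_i$ — it suffices to fix $\bx \in \cX$ and $i \in [n]$ and establish the scalar identity
\[
\Phi_i(\bx) = \max_{\overline{\bx} \in \cX}\big\{(\bap^i_{\overline{\bx}})^\transpose \bx + \bbt^i_{\overline{\bx}}\big\},
\]
since this makes $\{y_i \geq \Phi_i(\bx)\}$ coincide with $\{y_i \geq (\bap^i_{\overline{\bx}})^\transpose \bx + \bbt^i_{\overline{\bx}}~\forall \overline{\bx}\in\cX\}$.

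To get this identity I would isolate two properties shared by every cut of the forms \eqref{eq:oa_cut}, \eqref{eq:supper-modularcut1}, and \eqref{eq:supper-modularcut2} at each generating point $\overline{\bx} \in \cX$: (P1) \emph{validity}, i.e. $(\bap^i_{\overline{\bx}})^\transpose \bx + \bbt^i_{\overline{\bx}} \leq \Phi_i(\bx)$ for all $\bx \in \cX$; and (P2) \emph{tightness at the generating point}, i.e. $(\bap^i_{\overline{\bx}})^\transpose \overline{\bx} + \bbt^i_{\overline{\bx}} = \Phi_i(\overline{\bx})$. Granting these, the identity is immediate: (P1) gives $\max_{\overline{\bx}} \leq \Phi_i(\bx)$, while taking $\overline{\bx} = \bx$ (legitimate since $\bx \in \cX$) together with (P2) gives $\max_{\overline{\bx}} \geq \Phi_i(\bx)$; finiteness of $\cX$ means no attainment issue arises. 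Feasibility then transfers both ways: any \eqref{prob-2}-feasible point satisfies every cut by (P1), and any \eqref{prob-3}-feasible point satisfies $y_i \geq \Phi_i(\bx)$ by instantiating its cut at $\overline{\bx} = \bx$ and invoking (P2). Hence the feasible sets are equal.

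It remains to verify (P1) and (P2) for each cut family, which is where the actual work lies. For the outer-approximation cut \eqref{eq:oa_cut} both are essentially free: validity is exactly the first-order (tangent) underestimator inequality for the convex function $\Phi_i$ guaranteed by the Proposition, and tightness holds because the tangent at $\overline{\bx}$ agrees with $\Phi_i$ at $\overline{\bx}$ by construction. For the supermodular cuts \eqref{eq:supper-modularcut1}--\eqref{eq:supper-modularcut2}, validity follows from the increasing-marginals (supermodularity) property of the decreasing set function $\Phi_i(S)$ established in the Proposition, via the standard Nemhauser--Wolsey-type derivation. The one step needing explicit checking is tightness: I would substitute $\bx = \overline{\bx}$ into the $\bx$-space forms and observe that the two correction sums vanish \emph{term by term} — in \eqref{eq:supper-modularcut1} the factor $(\overline{x}_k - 1)x_k$ is zero whether $\overline{x}_k = 1$ (then $\overline{x}_k - 1 = 0$) or $\overline{x}_k = 0$ (then $x_k = \overline{x}_k = 0$), and symmetrically $\overline{x}_k(1 - x_k)$ vanishes in both cases, leaving exactly $\Phi_i(\overline{\bx})$; the analogous cancellation holds for \eqref{eq:supper-modularcut2}. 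I expect this componentwise vanishing of the correction terms, rather than any global estimate, to be the crux of the verification, with the remainder being routine bookkeeping.
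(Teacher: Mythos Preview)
Your proposal is correct and follows essentially the same approach as the paper: show that the feasible sets coincide by using validity of the cuts (your (P1)) for the MINLP$\,\Rightarrow\,$MIBLP direction and tightness at the generating point $\overline{\bx}=\bx$ (your (P2)) for the reverse direction. Your treatment is in fact more careful than the paper's, which simply asserts the tightness step without the term-by-term verification you outline for the supermodular cuts.
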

\begin{proof}
 We will prove that the feasible sets of \eqref{prob-2} and \eqref{prob-3} are equivalent. To this end, let \((\bx, \by)\) be a feasible solution to \eqref{prob-2}, implying \(y_i \leq \Phi_i(\bx)\) for all \(\bx \in \cX\). Since all the cuts in \eqref{prob-3} are valid, we have \(\Phi_i(\bx) \leq (\bap^i_{\overline{\bx}})^\transpose \bx + \bbt^i_{\overline{\bx}}\) for any \(\overline{\bx} \in \cX\). This leads to 
\[
y_i \leq (\bap^i_{\overline{\bx}})^\transpose \bx + \bbt^i_{\overline{\bx}},~\forall \overline{\bx} \in \cX,
\]
implying that \((\bx, \by)\) is also feasible for \eqref{prob-3}. 

For the opposite direction, let \((\bx, \by)\) be a feasible solution to \eqref{prob-3}. We have, for any \(\bx \in \cX\),
\[
y_i \leq \min_{\overline{\bx} \in \cX} (\bap^i_{\overline{\bx}})^\transpose \bx + \bbt^i_{\overline{\bx}} \leq (\bap^i_{\bx})^\transpose \bx + \bbt^i_{\bx} = \Psi_i(\bx),
\]
implying that \((\bx, \by)\) is also feasible for \eqref{prob-2}.
 \end{proof}

It is important to note that prior studies have demonstrated that submodular maximization can be equivalently reformulated as a MILP with constraints given by submodular cuts generated at every point in the feasible set. Similarly, a mixed-integer convex optimization can be reformulated as a MILP with constraints given by outer-approximation cuts generated at every point in the feasible set. The result in Proposition \ref{prop:MIBLP-equiv} shares a similar synergy but is distinct in that the objective in \eqref{prob-2} is neither submodular nor convex.

\paragraph{Cutting Plane.} Proposition \ref{prop:MIBLP-equiv} suggests that we can define a master problem by removing the nonlinear constraints in \eqref{ctr:Phi-y}. At each iteration of the CP, valid cuts (outer-approximation or submodular cuts) are added to the master problem, which is then solved repeatedly to obtain new candidate solutions. The process terminates when a solution that is feasible for \eqref{prob-2} is found. 
It is important to note that this process always terminates after a finite number of iterations and returns an optimal solution to \eqref{prob-2} and \eqref{prob-3}. To understand why, let \(\cY = \{(\bx^1,\by^1), \ldots, (\bx^k,\by^k)\}\) represent the \(k\) solution candidates obtained after \(k\) iterations of the CP. In the next iteration, assume the master problem returns a solution candidate \((\bx^*, \by^*)\). If \((\bx^*, \by^*)\) already appears in \(\cY\), it must be an optimal solution to \eqref{prob-2}. To prove this, assume \((\bx^*, \by^*) \in \cY\). Since the master problem at iteration \(k\) contains valid cuts generated by points in \(\cY\), we have:
\[
{y}^*_i \geq (\bap^i_{\overline{\bx}})^\transpose \bx + \bbt^i_{\overline{\bx}},~\forall \overline{\bx} \in \cY, \bx \in \cX, i \in [n].
\]
Since \((\bx^*, \by^*) \in \cY\), we have:
\[
{y}^*_i \geq (\bap^i_{\bx^*})^\transpose \bx + \bbt^i_{\bx^*} = \Psi_i(\bx^*),~\forall i \in [n],
\]
implying that \((\bx^*, \by^*)\) is feasible for \eqref{prob-2} (which also imply that  the CP process will terminate).  Moreover, the master problem shares the same objective function as \eqref{prob-2}, and its feasible set always contains the feasible set of \eqref{prob-2}. Thus, an optimal solution to the master problem being feasible for \eqref{prob-2} directly implies that \((\bx^*, \by^*)\) is optimal for \eqref{prob-2}. Therefore, we obtain the desired claim. Since the feasible set \(\cX\) is finite, this also indicates that the CP always terminates at an optimal solution after a finite number of iterations.

Our CP will rely on two versions of the master problem: one is the bilinear program in \eqref{prob-3}, and the other is a MILP where the bilinear terms are linearized using McCormick inequalities. Specifically, we will utilize the following master problems:
    \begin{align}\label{Bi-master}\tag{\sf Bi-Master}
        \min &\quad \sum_{i \in [n]} \sum_{j \in [m]} \rho_i r'_{ij} v_{ij} x_j y_i + \sum_{i \in [n]} \rho_i r_i v_{i0} y_i \\
        \text{s.t.} & \quad \texttt{[Outer-approximation \& supper-modular cuts]}\nonumber \\
        &\quad \bx \in \cX, \by \in \bbR^n_+\nonumber
    \end{align}
and a linearized version of \eqref{Bi-master}:
    \begin{align}\label{Li-master}\tag{\sf Li-Master}
        \min &\quad \sum_{i \in [n]} \sum_{j \in [m]} \rho_i r'_{ij} v_{ij} \theta_{ij} + \sum_{i \in [n]} \rho_i r_i v_{i0} y_i \\
        \text{s.t.} & \quad \texttt{[Outer-approximation \& supper-modular cuts]}\nonumber \\
            & \quad \theta_{ij} \leq \frac{1}{v_{i0} + v_{ij}} x_j \qquad \forall i \in [n], j \in [m] \label{ctr:master-mc-1}\\
    & \quad \theta_{ij} \geq \phi_{ij}(1) x_j \qquad \forall i \in [n], j \in [m]\label{ctr:master-mc-2}\\
    & \quad \theta_{ij} \leq y_i - \phi_{ij}(0) (1 - x_j) \qquad \forall i \in [n], j \in [m] \label{ctr:master-mc-3}\\
    & \quad \theta_{ij} \geq y_i - \frac{1}{v_{i0}} (1 - x_j) \qquad \forall i \in [n], j \in [m] \label{ctr:master-mc-4}\\
        &\quad \bx \in \cX, \by \in \bbR^n_+
    \end{align}
where Constraints \eqref{ctr:master-mc-1}-\eqref{ctr:master-mc-4} are McCormick inequalities used to linearize the bilinear term $\theta_{ij}  = x_iy_j$.  

In Algorithm \ref{algo:CP} below, we present the main steps of our CP method utilizing the bilinear master problem in \eqref{Bi-master}. A similar approach can be implemented using the linear master problem in \eqref{Li-master}.

\begin{algorithm}[htb]
    \caption{Cutting Plane} 
    \label{algo:CP}
    \SetKwRepeat{Do}{do}{until}
Set a small $\epsilon>0$ as the optimal gap\\
Use a greedy heuristic  algorithm to get a solution candidate  $\overline{\bx}$\\
Build the master problem and add some initial cuts using $\Bar{\bx}$

\Do{$y_i\geq \Phi_i(\Bar{\bx})$ for all $i\in [n]$
}
{
  Solve the  master problem to get a new solution candidate $(\Bar{\bx},\Bar{\by})$\\
  Add outer-approximation and supper-modular cuts  based on  $(\Bar{\bx},\Bar{\by})$ to the master problem
 }
    Return $(\overline{\bx})$ as an optimal solution.
\end{algorithm}

\paragraph{Segment-based CP.} In the above CP methods, cuts are added for each function \(\Phi_i(\bx)\) corresponding to each customer class \(i \in [n]\). Consequently, the number of cuts added to the master problem at each iteration is proportional to the number of customer classes. This can lead to a rapid increase in the size of the master problem if the number of customer classes is large, which is often the case when, for instance, the choice probabilities of the MMNL model are approximated by sample average approximation. The segment-based outer-approximation cut technique proposed by \cite{MaiLodi2020_OA} suggests that we can combine individual customer classes into larger groups to balance the number of cuts created at each iteration and the number of iterations until convergence. To implement this idea, let us divide the set of \(n\) customer classes into \(L\) disjoint groups \(\cG_1, \ldots, \cG_L\) such that \(\bigcup_{l \in [L]} \cG_l = [n]\). Using this,  \eqref{prob-1} can be rewritten as:
\begin{align}
    \min_{\bx\in \cX} &\quad \left\{\sum_{j\in [m]}\left(\sum_{l\in [L]} x_j G^j_l(\bx)\right) + \sum_{l\in [L]} R_l(\bx) \right\}\label{prob-1-multicut}
\end{align}
where
\begin{align*}
    G^j_l(\bx) &= \sum_{i\in \cG_l} \rho_i r'_{ij} v_{ij}\Phi_i(\bx) \\
    R_l(\bx) &= \sum_{i \in  \cG_l}\rho_i r_i v_{i0}\Phi_i(\bx)  
\end{align*}
We then can reformulate \eqref{prob-1-multicut} as:
\begin{align}\label{prob-2-SB}\tag{\sf MINLP-SB}
    \min &\quad \sum_{i \in [n]} \sum_{l\in [L]} x_j z^j_l + \sum_{l\in [L] } t_i \\
    \text{s.t.} & \quad z^j_l \geq G^j_l(\bx),~\forall l \in [L], j\in [m] \label{ctr:Phi-z} \\
     & \quad t_l \geq R_l(\bx),~\forall l \in [L] \label{ctr:Phi-t} \\
    &\quad \bx \in \cX, \by \in \bbR^n_+
\end{align}
It can be observed that \(G^j_l(\bx)\) and \(R_l(\bx)\) exhibit similar properties to those of \(\Psi_i(\bx)\), namely convexity and supper-modularity. Therefore, outer-approximation and supper-modular cuts of the form in \eqref{eq:oa_cut}, \eqref{eq:supper-modularcut1}, and \eqref{eq:supper-modularcut2} (but derived for \(G^j_l(\cdot)\) and \(R_l(\bx)\)) can be used to approximate the nonlinear functions. Specifically, for any candidate solution $\Bar{\bx}$, the following cuts are valid for \eqref{prob-2-SB}:
\begin{align}
    z^j_l &\geq \sum_{i \in \cG_t} \rho_i r'_i v_{ij}\left [ \sum_{j' \in [m]}\frac{-v_{ij'}}{(v_{i0}+\sum_{j\in [m]}v_{ij}x_j)^2}(x_j - \Bar{x}_j) + \Phi_i(\Bar{\bx}) \right ] \label{eq:SB-OA-z}\\
    t_l &\geq \sum_{i \in \cG_t}\rho_i r_i v_{i0} \left [ \sum_{j' \in [m]}\frac{-v_{ij'}}{(v_{i0}+\sum_{j\in [m]}v_{ij}x_j)^2}(x_j - \Bar{x}_j) + \Phi_i(\Bar{\bx}) \right ] \label{eq:SB-OA-t}\\
      z^j_l &\geq \sum_{i \in \cG_l} \rho_i r'_{ij} v_{ij} \left [\sum_{k\in [m]}\frac{v_{ik}(\Bar{x}_k - 1)}{\Psi_i(\Bar{\bx})\Psi_i(\Bar{\bx}+\bbe_k)}x_k + \sum_{k\in [m]} \frac{v_{ik}\Bar{x}_k}{\Psi_i(\bbe)\Psi_i(\bbe-\bbe_k)}(1-x_k) + \Phi_i(\Bar{\bx})\right ]\label{eq:SB-SC-z-1}\\
      z^j_l &\geq \sum_{i \in \cG_l} \rho_i r'_{ij} v_{ij}\left [ \sum_{k\in [m]}\frac{v_{ik}(\Bar{x}_k - 1)}{\Psi_i(\textbf{0})\Psi_i(\textbf{0}+\bbe_k)}x_k + \sum_{k\in [m]} \frac{v_{ik}\Bar{\bx}_k}{\Psi_i(\Bar{\bx})\Psi_i(\Bar{\bx}-\bbe_k)}(1-x_k) + \Phi_i(\Bar{\bx}) \right ]\label{eq:SB-SC-z-2}\\
           t_l &\geq \sum_{i \in \cG_l} \rho_i r_i v_{i0}  \left [\sum_{k\in [m]}\frac{v_{ik}(\Bar{x}_k - 1)}{\Psi_i(\Bar{\bx})\Psi_i(\Bar{\bx}+\bbe_k)}x_k + \sum_{k\in [m]} \frac{v_{ik}\Bar{x}_k}{\Psi_i(\bbe)\Psi_i(\bbe-\bbe_k)}(1-x_k) + \Phi_i(\Bar{\bx})\right ]\label{eq:SB-SC-t-1}\\
      t_l &\geq \sum_{i \in \cG_l} \rho_i r_i v_{i0} \left [ \sum_{k\in [m]}\frac{v_{ik}(\Bar{x}_k - 1)}{\Psi_i(\textbf{0})\Psi_i(\textbf{0}+\bbe_k)}x_k + \sum_{k\in [m]} \frac{v_{ik}\Bar{\bx}_k}{\Psi_i(\Bar{\bx})\Psi_i(\Bar{\bx}-\bbe_k)}(1-x_k) + \Phi_i(\Bar{\bx}) \right ]\label{eq:SB-SC-t-2}
\end{align}
where \eqref{eq:SB-OA-t}-\eqref{eq:SB-OA-z} are  outer-approximation cuts,  and \eqref{eq:SB-SC-z-1}-\eqref{eq:SB-SC-t-2} are supper-modular cuts.

Similar to the CP method described above, we can utilize the following two master problems Incorporated in into CP  to solve \eqref{prob-1-multicut}:
    \begin{align}\label{Bi-master-SB}\tag{\sf Bi-Master-SB}
        \min &\quad  \sum_{j \in [m]} \sum_{l\in [L]} x_j z^i_l + \sum_{l\in [L] } t_l \\
        \text{s.t.} & \quad \texttt{[Outer-approximation \& supper-modular cuts of the form \eqref{eq:SB-OA-z}-\eqref{eq:SB-SC-t-2}]}\nonumber \\
        &\quad \bx \in \cX, \by \in \bbR^n_+\nonumber
    \end{align}
and  a version where the bilinear terms are linearized:
   \begin{align}\label{Li-master-SB}\tag{\sf Li-Master-SB}
        \min &\quad \sum_{j \in [m]} \sum_{l \in [L]} \eta_{jl} + \sum_{l\in [L] } t_l \\
        \text{s.t.} & \quad \texttt{[Outer-approximation \& supper-modular cuts of the form \eqref{eq:SB-OA-z}-\eqref{eq:SB-SC-t-2}]}\nonumber \\
            & \quad \eta_{jl} \leq \left(\sum_{i\in \cG_l}\frac{\rho_i r'_i v_{ij}}{v_{i0} + v_{ij}} \right) x_j \qquad \forall l \in [L], j \in [m] \nonumber\\
    & \quad \eta_{jl} \geq \left(\sum_{i\in \cG_l} \rho_i r'_i v_{ij} \phi_{ij}(1)\right) x_j \qquad \forall l \in [L], j \in [m]\nonumber\\
    & \quad \eta_{jl} \leq z^j_l - \left(\sum_{i\in \cG_l} \rho_i r'_i v_{ij} \phi_{ij}(0)\right) (1 - x_j) \qquad \forall l \in [L], j \in [m] \nonumber\\
    & \quad \eta_{jl} \geq z^j_l - \left(\sum_{i\in \cG_l} \rho_i r'_i v_{ij} \frac{1}{v_{i0}} \right)(1 - x_j) \qquad \forall l \in [L], j \in [m] \nonumber\\
        &\quad \bx \in \cX, \pmb{\eta} \in \bbR^{m\times L}_+, \pmb{t} \in \bbR^L_+, \pmb{z}\in \bbR^{m\times L}_+\nonumber
    \end{align}
It can be seen that, at each iteration of the CP,  \(\cO(L \times m)\) cuts are added to the segment-based master problem in \eqref{Bi-master-SB} and \eqref{Li-master-SB}, while  \(\cO(n \times m)\) cuts are added to \eqref{Li-master} and \eqref{Bi-master} discussed in the previous section. As noted in previous work, the segment-based CP approach requires solving a smaller master problem at each iteration (as the number of added cuts is smaller), but the overall CP process may require more iterations to converge. This is because the segment-based approach loses some details of the objective functions, necessitating more iterations to successfully outer-approximate the nonlinear components. Choosing an appropriate number of segments helps balance the growth rate of the master problem and the total number of iterations required. In the experimental section, we will provide a detailed discussion on this matter.

\paragraph{Branch-and-Cut:}
The outer-approximation, submodular cuts, and the master problem outlined above can also be incorporated into a B\&C procedure to solve the assortment problem. Specifically, all valid cuts (outer-approximation and submodular) can be added using the \texttt{lazy-cut callback} procedure within solvers such as Gurobi and CPLEX. We also utilize two versions of the master problems, namely, bilinear and linear programs. 

To enhance the performance of the B\&C, we add Constraints \eqref{eq:relation_x_y} to the master problem. Although these constraints are redundant in the main problem formulation, they help provide tighter continuous relaxations during the B\&C procedure. It is worth noting that such constraints do not work with CP, as they directly lead to \(y_i \geq \Psi_i(\bx)\). Consequently, the master problem is no longer a restricted version of the main problem and solving the problem using CP becomes equivalent to solving the assortment problem through the MILP or CONIC reformulations.

\subsection{An Approximation Scheme}
An interesting feature of submodular maximization (or super-modular minimization) is that a simple polynomial-time greedy heuristic can guarantee a constant factor approximation of \((1 - 1/e)\). In the context of assortment optimization under the MMNL model, it is known that even without any constraints, approximating any constant factor is NP-hard. The objective in this context is not submodular, but as discussed previously, it contains some submodular components. In this section, we demonstrate that super-modularity can be leveraged to devise a simple polynomial-time greedy algorithm that yields solutions with semi-constant factor guarantees. Such solutions can be used for warm starts of CP or B\&C procedures to enhance the optimization.

First, we state in the following proposition that the objective function, defined as a set function, is monotonically increasing and supper-modular if all the prices \(r_{ij}\) are the same for all \(i \in [n]\) and \(j \in [m]\).

\begin{proposition}\label{prop:sub}
If \(r_{ij} = r_{ij'}\) for all \(i \in [n]\) and \(j,j' \in [m]\), then the objective function in \eqref{prob:Assort} is monotonically increasing and supper-modular.
\end{proposition}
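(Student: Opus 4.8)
The plan is to reduce the claim to Proposition~1, which already gives that each $\Phi_i(S)$ is monotonically decreasing and supper-modular. The equal-price hypothesis is exactly what makes this reduction possible: when $r_{ij}=r_{ij'}$ for all $j,j'$, write $r_i$ for the common value, so the contribution of class $i$ collapses to a function of $\sum_{j\in S}v_{ij}$ alone. Concretely I would rewrite each summand as
\[
\rho_i\,\frac{r_i\sum_{j\in S}v_{ij}}{v_{i0}+\sum_{j\in S}v_{ij}}
=\rho_i r_i\left(1-\frac{v_{i0}}{v_{i0}+\sum_{j\in S}v_{ij}}\right)
=\rho_i r_i-\rho_i r_i v_{i0}\,\Phi_i(S),
\]
so that $F(S)=\sum_{i\in[n]}\rho_i r_i-\sum_{i\in[n]}\rho_i r_i v_{i0}\,\Phi_i(S)$; that is, $F$ is a constant plus a non-negatively weighted combination of the functions $-\Phi_i(S)$.

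Since monotonicity and modularity are both preserved under non-negative linear combinations and under the addition of constants, it suffices to analyse a single term $-\Phi_i(S)$. For monotonicity, Proposition~1 gives that $\Phi_i$ is decreasing, hence $-\Phi_i$ is increasing, and summing with the non-negative weights $\rho_i r_i v_{i0}\ge 0$ keeps $F$ monotonically increasing. For the modularity I would start from the marginal-difference inequality that Proposition~1 establishes, namely
\[
\Phi_i(A)-\Phi_i(A\cup\{j^*\})\ \ge\ \Phi_i(B)-\Phi_i(B\cup\{j^*\}),
\qquad A\subseteq B,\ j^*\notin B,
\]
multiply through by the positive factor $\rho_i r_i v_{i0}$, and sum over $i$ to transport the inequality to $F$.

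The subtle point — and the step I expect to be the main obstacle — is that the map carrying $\Phi_i$ to the class-$i$ term of $F$ is the affine transformation $t\mapsto \rho_i r_i-\rho_i r_i v_{i0}\,t$, whose slope $-\rho_i r_i v_{i0}$ is negative; a negative slope reverses the direction of the modularity inequality, so the property of $F$ cannot simply be inherited verbatim from $\Phi_i$ and the sign must be tracked with care. As an independent cross-check I would compute the marginal gain directly: setting $w_i(S):=\sum_{j\in S}v_{ij}$, adding $j^*\notin S$ changes the class-$i$ term by $\rho_i r_i v_{i0}\,v_{ij^*}\big/\big[(v_{i0}+w_i(S))(v_{i0}+w_i(S)+v_{ij^*})\big]$, which is non-negative (reconfirming monotonicity) and varies monotonically as $S$ grows. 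Comparing this expression for $A\subseteq B$ pins down the modularity direction of $F$ explicitly and independently of any terminological convention, thereby completing the argument.
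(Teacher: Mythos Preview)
The paper does not give a proof of this proposition, so there is nothing to compare against; your reduction is the natural one and is what the authors presumably have in mind. Rewriting each class-$i$ term as $\rho_i r_i - \rho_i r_i v_{i0}\,\Phi_i(S)$ and then invoking Proposition~1 is exactly right, and it immediately yields the monotone-increasing part.

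Where you stop short is the last step. You correctly flag that the affine map $t\mapsto \rho_i r_i-\rho_i r_i v_{i0}\,t$ has negative slope and hence reverses the modularity direction, and you propose a direct marginal-gain computation as a cross-check, but you do not state its outcome. If you actually carry it out you get, for $A\subseteq B$ and $j^*\notin B$,
\[
F(A\cup\{j^*\})-F(A)=\sum_{i}\frac{\rho_i r_i v_{i0}\,v_{ij^*}}{(v_{i0}+w_i(A))(v_{i0}+w_i(A)+v_{ij^*})}\ \ge\ F(B\cup\{j^*\})-F(B),
\]
i.e.\ \emph{decreasing} marginal gains. In standard terminology this is sub-modularity, not super-modularity; equivalently, $F$ is a non-negative combination of $-\Phi_i$, and since each $\Phi_i$ is super-modular by Proposition~1, $-\Phi_i$ is sub-modular. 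So the property you prove contradicts the literal label ``supper-modular'' in the statement. This is an inconsistency in the paper's own usage rather than a flaw in your argument: the very next paragraph invokes the $(1-1/e)$ greedy guarantee for $\max_{|S|\le r}G(S)$, and that guarantee requires monotone sub-modularity, which is precisely what your computation establishes. Your proof is complete once you commit to that conclusion.
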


We then define the following set function \(G(S)\), which is \(F(S)\) when all the prices are set to unit values:
\[
G(S) = \sum_{i \in [n]} \rho_i \left[\frac{\sum_{j \in S} v_{ij}}{v_{i0} + \sum_{j \in S} v_{ij}}\right]
\]

From Proposition \ref{prop:sub}, we know that \(G(S)\) is monotonic and supper-modular, implying that a simple greedy heuristic can offer at least a \((1 - 1/e)\) approximation solution to the problem \(\max_{S,~ |S| \leq r} G(S)\). Below, we demonstrate that this greedy procedure can also provide a guaranteed solution to the assortment optimization problem under MMNL, both in unconstrained and cardinality-constrained settings.
\begin{proposition}\label{prop:appro}
Let \(\cS = \{S_1, \ldots, S_m\}\) be a set of solutions returned by performing the Greedy Heuristic on \(\max_{|S| \leq r} G(S)\) for \(r = 1, \ldots, m\). Then, \(\cS\) contains a \(\left(\frac{r_{\min}}{r_{\max}} \left(1 - \frac{1}{e}\right)\right)\)-approximation solution to \eqref{prob:Assort}, under the cardinality constraint \(|S| \leq C\) or without any constraints. In other words
\begin{align}
\max_{S\in \cS}F(S) &\geq \frac{r_{\min}}{r_{\max}} \left(1 - \frac{1}{e}\right) \max_{S} F(S)\nonumber \\
\max_{S\in \cS, ~|S|\leq C}F(S) &\geq \frac{r_{\min}}{r_{\max}} \left(1 - \frac{1}{e}\right) \max_{|S| \leq C} F(S)    
\end{align}
where $r_{\min} = \min_{i\in [n],j\in [m]} r_{ij}$ and $r_{\max} = \max_{i\in [n],j\in [m]} r_{ij}$ 

\end{proposition}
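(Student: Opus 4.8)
The plan is to transfer the greedy approximation guarantee available for the unit-price surrogate $G$ to the true revenue $F$ by means of a uniform two-sided bound relating the two. First I would record the elementary sandwich inequality: for every $S \subseteq [m]$,
\[
r_{\min}\, G(S) \;\le\; F(S) \;\le\; r_{\max}\, G(S).
\]
This holds termwise, because within each class $i$ the numerator $\sum_{j\in S} r_{ij} v_{ij}$ lies between $r_{\min}\sum_{j\in S} v_{ij}$ and $r_{\max}\sum_{j\in S} v_{ij}$ (as $r_{\min}\le r_{ij}\le r_{\max}$), and dividing by the common positive denominator $v_{i0}+\sum_{j\in S} v_{ij}$ and summing over $i$ with the nonnegative weights $\rho_i$ preserves the ordering.

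Next I would invoke the greedy guarantee already available for $G$. Since $G$ is monotone and has the diminishing-returns (submodularity) property inherited from the supermodularity of the $\Phi_i$ in Proposition \ref{prop:sub}, the classical greedy analysis gives, for every budget $r\in\{1,\ldots,m\}$,
\[
G(S_r) \;\ge\; \Bigl(1-\tfrac1e\Bigr)\max_{|S|\le r} G(S),
\]
where $S_r\in\cS$ is the greedy output at cardinality $r$.

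I would then combine the two. Let $S^*$ denote an optimal solution of the target problem ($\max_S F(S)$ in the unconstrained case, or $\max_{|S|\le C} F(S)$ in the constrained case), and set $r^*=|S^*|$. Since $S^*$ is feasible for the budget-$r^*$ problem for $G$, we have $\max_{|S|\le r^*} G(S)\ge G(S^*)$, so the greedy solution $S_{r^*}\in\cS$ satisfies
\[
F(S_{r^*}) \;\ge\; r_{\min}\, G(S_{r^*}) \;\ge\; r_{\min}\Bigl(1-\tfrac1e\Bigr) G(S^*) \;\ge\; \frac{r_{\min}}{r_{\max}}\Bigl(1-\tfrac1e\Bigr) F(S^*),
\]
using the lower sandwich, the greedy bound, and finally $G(S^*)\ge F(S^*)/r_{\max}$ from the upper sandwich. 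Because $S_{r^*}\in\cS$ and moreover $|S_{r^*}|\le r^*\le C$ in the constrained case, taking the best element of $\cS$ yields both claimed inequalities $\max_{S\in\cS}F(S)\ge \frac{r_{\min}}{r_{\max}}(1-\tfrac1e)\max_S F(S)$ and $\max_{S\in\cS,\,|S|\le C}F(S)\ge \frac{r_{\min}}{r_{\max}}(1-\tfrac1e)\max_{|S|\le C} F(S)$.

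The one subtle point is that the cardinality $r^*=|S^*|$ of the $F$-optimum is not known in advance, so no single greedy run can be guaranteed to match it; this is exactly why the scheme runs greedy for every budget $r=1,\ldots,m$ and stores all outputs in $\cS$, ensuring the right budget is represented. I expect this matching-cardinality argument together with the sandwich bound to be the crux, whereas the submodularity of $G$ and its $(1-1/e)$ greedy guarantee are inherited directly from the preceding results.
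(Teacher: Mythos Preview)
Your proposal is correct and follows essentially the same approach as the paper: establish the sandwich bound $r_{\min}G(S)\le F(S)\le r_{\max}G(S)$, invoke the $(1-1/e)$ greedy guarantee for the monotone submodular surrogate $G$, and chain the two. The only cosmetic difference is that the paper first treats the constrained case using the greedy output $S_C$ at the given budget $C$ and then reduces the unconstrained case to it by setting $C=|S^*|$, whereas you handle both cases uniformly by taking $r^*=|S^*|$ and using $S_{r^*}$; both variants are equivalent since $G$ is monotone and $|S_{r^*}|\le r^*\le C$.
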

\begin{proof}
  We first consider \eqref{prob:Assort} without a cardinality constraint \( |S| \leq C \). For any \( S \subseteq [m] \), we can see that:
\[
r_{\min} G(S) \leq F(S) \leq r_{\max} G(S)
\]
which implies 
\[
r_{\min} \max_{S,~ |S| \leq C} G(S) \leq \max_{S,~ |S| \leq C} F(S) \leq r_{\max} \max_{S,~ |S| \leq C} G(S)
\]
Moreover, since \( G(S) \) is monotonic and supper-modular, it is guaranteed that \( G(S_C) \geq (1-1/e) \max_{S,~ |S| \leq C} G(S) \). Thus we have:
\begin{align}
   r_{\max} G(S_C) &\geq \left(1-\frac{1}{e}\right) r_{\max} \max_{S,~ |S| \leq C} G(S) \nonumber\\
   &\geq \left(1-\frac{1}{e}\right) \max_{S,~ |S| \leq C} F(S) \nonumber\\
   &\geq \left(1-\frac{1}{e}\right) F(S_C) \nonumber\\
   &\geq \left(1-\frac{1}{e}\right) r_{\min} G(S_C) \nonumber
\end{align}
which implies:
\begin{align}
    \frac{F(S_C)}{\max_{S,~ |S| \leq C} F(S)} \geq \frac{\left(1-\frac{1}{e}\right) r_{\min} G(S_C)}{ r_{\max} G(S_C)} = \frac{r_{\min}}{r_{\max}} \left(1 - \frac{1}{e}\right) \label{eq:S-cS}
\end{align}
We obtain the approximation factor as desired.

For the unconstrained case, let \( S^* \) be optimal for the unconstrained problem \(\max_{S \subseteq [m]} F(S) \), and let \( C = |S^*| \). From \eqref{eq:S-cS}, we see that:
\[
F(S^*) = \max_{S,~ |S| \leq C} F(S) \leq \frac{F(S_C)}{ \frac{r_{\min}}{r_{\max}} \left(1 - \frac{1}{e}\right)}
\]
which directly implies that:
\[
\max_{S \in \cS} F(S) \geq \frac{r_{\min}}{r_{\max}} \left(1 - \frac{1}{e}\right) \max_{S \subseteq [m]} F(S)
\]
as desired. This completes the proof.
\end{proof}

Proposition \ref{prop:appro} implies that one can obtain a \(\frac{r_{\min}}{r_{\max}} \left(1 - \frac{1}{e}\right)\)-approximation solution by performing the Greedy Heuristic on \(\max_{|S| = r} G(S)\) for \(r = 1, \ldots, m\). This process can be completed in \(\mathcal{O}(m^3 n)\), where \(\mathcal{O}(mn)\) is the time required to compute the objective function \(G(S)\) for any \(S \subseteq [m]\), and \(\mathcal{O}(m^2)\) is the maximum number of steps in the Greedy Heuristic for \(r \in [m]\). To the best of our knowledge, this is the first approximation result for the assortment optimization problem under MMNL when the number of customer classes is not constant.

It is known that \eqref{prob:Assort} is NP-hard to approximate any constant factor. Our result stated in Proposition \ref{prop:appro}, however, does not conflict with this inapproximability claim, as \(r_{\min}/r_{\max}\) is not constant. In addition, the approximation in Proposition \ref{prop:appro} would be useful when the differences between prices in the set \([m]\) are not large. For instance, if \(r_{\min}/r_{\max} \geq 0.5\) (i.e., the price of the most expensive item is at most double the price of the least expensive one), then the Greedy Heuristic mentioned in Proposition \ref{prop:appro} can guarantee approximately \(0.32\) approximation solutions. Related to this approximation result, \cite{berbeglia2020assortment} provide an approximation guarantee for revenue-ordered assortments for the unconstrained mixed-logit assortment problem, and \cite{han2022fractional} establish conditions under which the assortment objective function (in the form of a subset function) is submodular. Our results are more general, as we provide approximation guarantees for both unconstrained and cardinality-constrained problems, without being restricted to additional conditions that may not hold in general settings.

To further assess the performance of the above Greedy Heuristic with respect to the performance guarantee mentioned in Proposition \ref{prop:appro}, we take instances of \{200, 500\} products and \{20, 50\} customer classes and solve them using the Greedy Heuristic. The maximum number of products in the assortment is set to \{20, 50\}, and the value \(\frac{r_{\min}}{r_{\max}}\) is generated within the range [0.25, 0.9]. Figure \ref{fig:guarantee} plots the optimal ratios given by the Greedy Heuristic, computed by taking the ratios between the objective values given by the Greedy Heuristic and the corresponding optimal values. For comparison, we also include the approximation guarantee \((1-1/e) \frac{r_{\min}}{r_{\max}}\). This figure shows that the actual optimal ratios provided by the Greedy Heuristic are significantly higher than the theoretical guarantee, which aligns with expectations. In fact, the Greedy Heuristic was able to return near-optimal solutions for the all cases considered. It can also be observed that when \(r_{\min}\) gets closer to \(r_{\max}\), the actual optimal ratio is closer to the theoretical guarantee.

\begin{figure}[!h]
    \centering
    \includegraphics[width=0.8\linewidth]{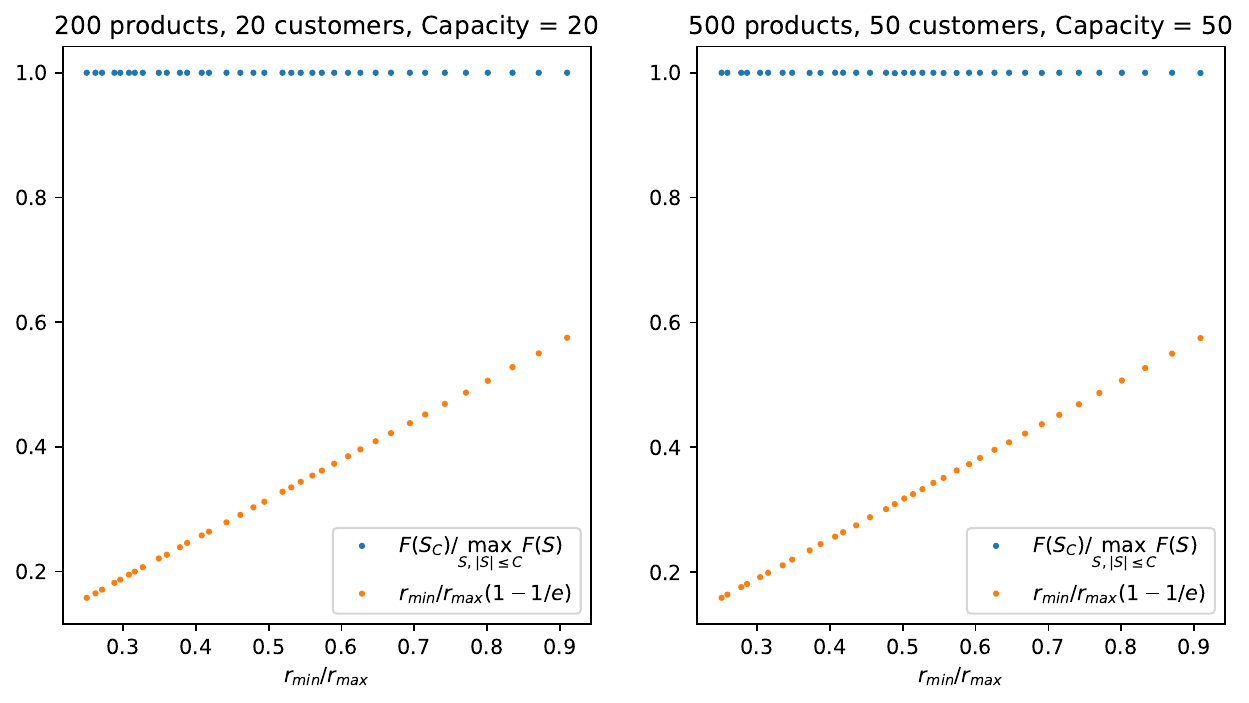}
    \caption{Optimality ratios provided by the Greedy Heuristic.}
    \label{fig:guarantee}
\end{figure}
 
\section{Numerical Study}\label{sec:result}
\subsection{Experimental settings}
To examine the effectiveness of the outer-approximation and supper-modular cuts, we perform a numerical study on four sets of problems provided by \cite{Sen2018} and several newly generated sets. The problems are solved with the Gurobi 11.0.0 solver on a computer with a 12th Gen Intel (R) CoreTM i7-1255U processor and 16 GB of RAM operating on 64-bit Windows 10. We use the default settings of Gurobi, except that we force the solver to use 8 threads to explore the solutions. All data and \texttt{.lp} files provided by \cite{Sen2018} are available at 
\url{https://atamturk.ieor.berkeley.edu/data/assortment.optimization/}. Existing \texttt{.lp} files found via the above URL are directly run with Gurobi, while our models and CONIC formulation introduced by \cite{Sen2018} for new datasets are implemented in C++. {We set a time limit of one hour for all instances. If the solver or algorithm exceeds this time limit, it is forced to stop, and the best solutions found within the allotted time are reported.}

In \cite{Sen2018}, the first and second sets of problems are randomly generated with $n = \{20, 50\}$ and $m = \{200, 500\}$, respectively. Let us denote these sets as \texttt{Sen\_200\_20} and \texttt{Sen\_500\_50}. For each product $j \in [m]$, the revenue $r_{ij}$ is the same across the customer classes $i \in [n]$ and drawn from a uniform $U[1, 3]$ distribution. The utilities $v_{ij}$ are generated from a $U[1, 2]$ distribution. The probability $\rho_i$ that the demand originates from customer class $i$ is set to $1/20$ in the first set of problems and $1/50$ in the second. The no-purchase preference $v_{i0}$ is either 5 or 10 in \texttt{Sen\_200\_20} and either 10 or 20 in \texttt{Sen\_500\_50}. The only cardinality constraint $(\sum_{j \in [m]} x_j \leq C)$ is considered, and the maximum capacity C of the assortment is taken from $\{10,20,50,100,200\}$ in \texttt{Sen\_200\_20} and $\{20,50,100,200,500\}$ in \texttt{Sen\_500\_50}. For each value of no-purchase preference and capacity, the authors generate 5 instances, resulting in a total of 50 instances.

The third set of problems in \cite{Sen2018} is associated with 100 customers and 100 products. Let us denote it as \texttt{Sen\_100\_100}. This is also the hard set of problems generated based on a working paper that is now published as \cite{Desir2022}. An undirected graph $G=(V,E)$ with $V = m = n = 100$ is created to generate the utilities $v_{ij}$. The revenue $r_{ij}$ is randomly generated from a $U[1,3]$ distribution and the probability $\rho_i$ is in $[0,1]$. The capacity $C$ is taken from $\{10,20,50,100\}$ and the no-purchase preference is either 1 or 2. There are 5 instances generated for each pair of capacity and no-purchase preference, leading to 40 instances.

The fourth set of problems is created in the same way as the first set, except that the set of 200 products is divided into 5 disjoint subsets $S_k \text{ } (k = 1,...,5)$. This leads to a general capacity constraint: $\sum_{j \in [m]} \beta_j x_j \leq \alpha$ where $\beta_j$ is randomly generated from $U[0,1]$ distribution and 5 extra capacity constraints: $\sum_{j \in [S_k]} x_j \leq C_k$. The set of pairs $(\alpha,C_k)$ is $\{(5,2),(10,4),(25,10),(50,20),(100,40)\}$ and the no-purchase preference is either 10 or 20. We denote this set of instances as \texttt{Sen\_200\_20\_5}.

Besides the four sets of instances mentioned above, we choose other parameter settings and generated  several new sets of instances as follows:
\begin{itemize}
    \item \texttt{200\_20\_10}: The utilities, the no-purchase preferences, and the revenues are the same as \texttt{Sen\_200\_20\_5}. The set of 200 products is divided into 10 disjoint subsets. The set of pairs $(\alpha,C_k)$ is $\{(5,2),(10,4),(25,10),(50,20)\}$.
    \item \texttt{500\_50\_5} and \texttt{500\_50\_10}: The utilities, the no-purchase preferences, and the revenues are generated by the same technique used for \texttt{Sen\_500\_50}. The set of 500 products is divided into 5 and 10 disjoint subsets, respectively. The $(\alpha,C_k)$ is one of $\{$(10,4), (25,10), (50,20), (125,50), (250,100)$\}$ for \texttt{500\_50\_5} and is in $\{(5,2),(10,4),(25,10),(50,20),(125,50)\}$ for \texttt{500\_50\_10}. The parameter $\beta_j$ is in $U[0,1]$.
    \item \texttt{100\_100\_5} and \texttt{100\_100\_10}: The utilities, the no-purchase preferences, and the revenues are the same as \texttt{Sen\_100\_100}. The set of 100 products is divided into 5 and 10 disjoint subsets, respectively. The set of pairs $(\alpha,C_k)$ is $\{(5,2),(10,4),(25,10),(50,20)\}$ for \texttt{100\_100\_5} and $\{(5,2),(10,4),(25,10)\}$ for \texttt{100\_100\_10}.
    \item \texttt{1000\_100}: This set of instances considers cardinality constraint only. The utilities are in $U[0,1]$ and the revenues are generated from $U[1,3]$. The probability parameters $\rho_i \text{ } (i \in [n])$ is in $U[0,1]$. The maximum capacity $(C)$ is one of $\{25,50,100,250,500\}$ and the no-purchase preference is either 10 or 20.
    \item \texttt{1000\_100\_5} and \texttt{1000\_100\_10}: The parameters of instances are the same as \texttt{1000\_100}, except that 1000 products are divided into 5 and 10 disjoint sets, and the capacity constraints for subsets are considered besides the general capacity ($\beta_j \in U[0,1] \text{ } \forall j \in [m]$). The value of each pair $(\alpha,C_k)$ for \texttt{1000\_100\_5} is chosen from the set $\{$(25,10), (50,20), (125,50), (250,100), (500,200)$\}$, and one for \texttt{1000\_100\_10} is in $\{$(10,4), (25,10), (50,20), (125,50), (250,100)$\}$.
\end{itemize}

\subsection{Comparison Results}
Our experimental results indicate that when the number of customer classes is not too large (as is the case for all the instances considered above), B\&C generally outperforms CP. Therefore, we will first focus on the comparison results for the B\&C method (with comparisons to CP provided in the appendix). In the next subsection, we will present numerical results using CP for the case of a large number of customer classes.

Table \ref{tab:capacity_only} reports comparison results for instances of single cardinality constraint. The first column lists groups of instances with the number of products varying from 100 to 1000. The second and third columns present the no-purchase preference and the maximum number of products offered in an assortment. The next five columns show number of optimal solutions found by the CONIC and B\&C algorithm under the two master programs \eqref{Bi-master} and \eqref{Li-master}. Here,  Columns ``CONIC'', ``OA'' and ``OA+SC'' indicate the numbers of optimal solutions found by the CONIC and B\&C procedure in two different scenarios, including outer-approximation cuts (OA) and combination of outer-approximation and supper-modular cut (OA+SC), within the time limit. The last five columns refer to the average running time (in seconds) for each group of instances. The symbol ``-'' means that the method cannot terminate within the time limit (3600 seconds). The highest numbers of optimal solutions found over all methods are highlighted in bold. The last row presents the total numbers of optimal solutions over all instances for each method. Here, we do not include the MILP reformulation in the comparison, as prior work \citep{Sen2018} has reported that this approach is outperformed by the CONIC approach.

\begin{table}[!h]
\centering
\resizebox{0.95\textwidth}{!}{%
\begin{tabular}{|c|c|c|c|cc|cc|r|rr|rr|}
\cline{4-13}
\multicolumn{1}{l}{} &
\multicolumn{1}{l}{} &
  \multicolumn{1}{l}{} & 
  \multicolumn{5}{|c|}{\#(Solved instances)} &
  \multicolumn{5}{c|}{Computing time (s)} \\ \hline
\multicolumn{1}{|l|}{} &
\multicolumn{1}{l|}{} &
  \multicolumn{1}{l|}{} &
  \multicolumn{1}{l}{} &
  \multicolumn{2}{|c|}{\eqref{Bi-master} + B\&C} &
  \multicolumn{2}{c|}{\eqref{Li-master} + B\&C} &
  \multicolumn{1}{l|}{} &
  \multicolumn{2}{c|}{\eqref{Bi-master} + B\&C} &
  \multicolumn{2}{c|}{\eqref{Li-master} + B\&C} \\ \cline{5-8} \cline{10-13}
  Dataset&
$v_{i0}$ &
  $\alpha$ &
  \multicolumn{1}{l|}{CONIC} &
  \multicolumn{1}{l}{OA} &
  \multicolumn{1}{l|}{OA+SC} &
  \multicolumn{1}{l}{OA} &
  \multicolumn{1}{l|}{OA+SC} &
  CONIC &
  OA &
  OA+SC &
  OA &
  OA+SC \\ \hline
\multirow{10}{*}{\texttt{Sen\_200\_20}}  &
\multirow{5}{*}{5}  & 10  & \textbf{5} & \textbf{5} & \textbf{5} & \textbf{5} & \textbf{5} & 2.54 & \textbf{2.40} & 2.82 & 4.44 & 5.01 \\
&& 20  & \textbf{5} & \textbf{5} & \textbf{5} & \textbf{5} & \textbf{5} & 3.07 & \textbf{2.32} & 3.19 & 3.12 & 3.49 \\
&& 50  & \textbf{5} & \textbf{5} & \textbf{5} & \textbf{5} & \textbf{5} & 1.57 & 1.25 & 1.50 & \textbf{1.13} & 1.23 \\
&& 100 & \textbf{5} & \textbf{5} & \textbf{5} & \textbf{5} & \textbf{5} & 0.68 & 0.96 & 1.12 & \textbf{0.53} & 0.58 \\
&& 200 & \textbf{5} & \textbf{5} & \textbf{5} & \textbf{5} & \textbf{5} & 0.74 & 1.03 & 1.29 & \textbf{0.56} & \textbf{0.56} \\ \cline{2-13}
&\multirow{5}{*}{10} & 10  & \textbf{5} & \textbf{5} & \textbf{5} & \textbf{5} & \textbf{5} & 2.95 & \textbf{1.47} & 1.83 & 4.11 & 4.24 \\
&& 20  & \textbf{5} & \textbf{5} & \textbf{5} & \textbf{5} & \textbf{5} & 3.06 & \textbf{1.70} & 2.47 & 3.25 & 3.25 \\
&& 50  & \textbf{5} & \textbf{5} & \textbf{5} & \textbf{5} & \textbf{5} & 4.67 & \textbf{1.68} & 2.04 & 1.96 & 2.04 \\
&& 100 & \textbf{5} & \textbf{5} & \textbf{5} & \textbf{5} & \textbf{5} & 0.79 & 0.97 & 1.20 & \textbf{0.54} & 0.57 \\
&& 200 & \textbf{5} & \textbf{5} & \textbf{5} & \textbf{5} & \textbf{5} & 0.65 & 1.00 & 1.26 & 0.70 & \textbf{0.69} \\ \hline
\multirow{10}{*}{\texttt{Sen\_500\_50}}  &
\multirow{5}{*}{10}  & 20  & \textbf{5} & \textbf{5} & \textbf{5} & \textbf{5} & \textbf{5} & 99.32  & 40.08 & \textbf{38.26} & 127.63 & 126.22 \\
&& 50  & \textbf{5} & \textbf{5} & \textbf{5} & \textbf{5} & \textbf{5} & 60.61  & 59.33 & \textbf{58.96} & 63.51  & 63.86  \\
&& 100 & \textbf{5} & \textbf{5} & \textbf{5} & \textbf{5} & \textbf{5} & 36.68  & 32.99 & 38.43 & \textbf{27.15}  & 28.19  \\
&& 200 & \textbf{5} & \textbf{5} & \textbf{5} & \textbf{5} & \textbf{5} & 22.13  & 14.71 & 17.51 & 8.11   & \textbf{7.29}   \\
&& 500 & \textbf{5} & \textbf{5} & \textbf{5} & \textbf{5} & \textbf{5} & 25.50  & 21.58 & 20.70 & \textbf{9.43}   & 9.45   \\ \cline{2-13}
&\multirow{5}{*}{20} & 20  & \textbf{5} & \textbf{5} & \textbf{5} & \textbf{5} & \textbf{5} & 111.36 & 23.48 & \textbf{17.42} & 150.77 & 156.31 \\
&& 50  & \textbf{5} & \textbf{5} & \textbf{5} & \textbf{5} & \textbf{5} & 109.77 & \textbf{84.90} & 90.43 & 108.91 & 108.95 \\
&& 100 & \textbf{5} & \textbf{5} & \textbf{5} & \textbf{5} & \textbf{5} & 70.30  & 70.95 & 73.74 & \textbf{65.37}  & 70.93  \\
&& 200 & \textbf{5} & \textbf{5} & \textbf{5} & \textbf{5} & \textbf{5} & 22.30  & 20.63 & 20.24 & 8.69   & \textbf{8.19}   \\
&& 500 & \textbf{5} & \textbf{5} & \textbf{5} & \textbf{5} & \textbf{5} & 26.05  & 20.17 & 18.86 & 9.11   & \textbf{8.28}   \\ \hline
\multirow{8}{*}{\texttt{Sen\_100\_100}}&
\multirow{4}{*}{1} & 10    & \textbf{5} & \textbf{5} & \textbf{5} & \textbf{5} & \textbf{5} & 4.29 & \textbf{4.14} & 4.43 & 30.36 & 27.72 \\
&& 20    & \textbf{5} & \textbf{5} & \textbf{5} & \textbf{5} & \textbf{5} & 8.66 & 4.78 & \textbf{4.64} & 50.95 & 40.17 \\
&& 50    & \textbf{5} & \textbf{5} & \textbf{5} & \textbf{5} & \textbf{5} & 1.52 & 1.02 & \textbf{0.99} & 4.31  & 4.41  \\
&& 100 & \textbf{5} & \textbf{5} & \textbf{5} & \textbf{5} & \textbf{5} & \textbf{0.38} & 0.41 & 0.43 & 2.22  & 2.32  \\ \cline{2-13}
&\multirow{4}{*}{2} & 10    & \textbf{5} & \textbf{5} & \textbf{5} & \textbf{5} & \textbf{5} & 2.49 & \textbf{2.42} & 2.51 & 16.55 & 15.53 \\
&& 20    & \textbf{5} & \textbf{5} & \textbf{5} & \textbf{5} & \textbf{5} & 6.59 & \textbf{3.77} & 3.91 & 27.95 & 25.96 \\
&& 50    & \textbf{5} & \textbf{5} & \textbf{5} & \textbf{5} & \textbf{5} & 3.64 & \textbf{1.59} & 2.03 & 8.24  & 9.52  \\
&& 100 & \textbf{5} & \textbf{5} & \textbf{5} & \textbf{5} & \textbf{5} & \textbf{0.21} & 0.22 & 0.24 & 1.03  & 1.10  \\ \hline

\multirow{10}{*}{\texttt{1000\_100}}&
\multirow{5}{*}{10} & 25  & 1 & \textbf{5} &\textbf{5}  & \textbf{5} &\textbf{5}  & 3530.94 & \textbf{288.27} &293.99  & 539.53 &1064.15  \\
&& 50  & \textbf{5} & \textbf{5} & \textbf{5} & \textbf{5} & \textbf{5} & 1637.69 & 450.78 & \textbf{397.34} & 685.28 & 518.85 \\
&& 100 & \textbf{5} & \textbf{5} & \textbf{5} & \textbf{5} & \textbf{5} & 439.77  & 305.85 & \textbf{226.62} & 402.85 & 290.64 \\
&& 250 & \textbf{5} & \textbf{5} & \textbf{5} & \textbf{5} & \textbf{5} & 104.67  & 74.23  & 67.70 & 43.33  & \textbf{38.84} \\
&& 500 & \textbf{5} & \textbf{5} & \textbf{5} & \textbf{5} & \textbf{5} & 135.42  & 80.27  & 83.84 & \textbf{52.73}  & 56.75 \\ \cline{2-13}
&\multirow{5}{*}{20} & 25  & 0 & \textbf{5} & \textbf{5} & \textbf{5} & \textbf{5} & -       & 149.04 & \textbf{132.68} & 175.08 & 166.27 \\
&& 50  & \textbf{5} & \textbf{5} & \textbf{5} & \textbf{5} & \textbf{5} & 1467.83 & 385.68 & \textbf{283.00} & 603.74 & 450.73 \\
&& 100 & \textbf{5} & \textbf{5} & \textbf{5} & \textbf{5} & \textbf{5} & 1368.80 & 400.76 & \textbf{284.13} & 490.18 & 443.51 \\
&& 250 & \textbf{5} & \textbf{5} & \textbf{5} & \textbf{5} & \textbf{5} & 348.09  & 76.41  & 72.88 & \textbf{50.43}  &  50.45\\
&& 500 & \textbf{5} & \textbf{5} & \textbf{5} & \textbf{5} & \textbf{5} & 107.90  & 70.29  & 72.85 & 34.65  &  \textbf{34.46}\\ \hline
\multicolumn{3}{|r|}{Summary} & 181 & \textbf{190} & \textbf{190} & \textbf{190} & \textbf{190} & 191.96  & 71.15  & \textbf{61.78} & 100.49  &  101.34\\ \hline
\end{tabular}%
}
\caption{Results for \texttt{Sen\_200\_20}, \texttt{Sen\_500\_50}, \texttt{Sen\_100\_100} and \texttt{1000\_100}. Time limit set to one hour. Average values are calculated by taking into account only those instances solved to optimality by the respective approach. Best values (the largest number of instances solved, or the lowest computing time) are highlighted in bold.}
\label{tab:capacity_only}
\end{table}

As shown in Table \ref{tab:capacity_only}, the B\&C methods are able to solve all the instances from \cite{Sen2018} to optimality, whereas the CONIC approach provides optimal solutions for 181 out of 190 instances. Specifically, for the largest-sized instances from the \texttt{1000\_100} dataset, the B\&C methods find all optimal solutions out of 50 instances compared to 41 for the CONIC approach. 
In terms of computing time, the B\&C approaches are approximately 1.89 to 3.1 times faster than CONIC for all instances solved optimally. Among the B\&C methods, the algorithm based on \eqref{Bi-master} with OA+SC appears to be the fastest.

\begin{table}[!h]
\centering
\resizebox{0.95\textwidth}{!}{%
\begin{tabular}{|c|c|c|c|cc|cc|r|rr|rr|}
\cline{4-13}
\multicolumn{1}{l}{} &
\multicolumn{1}{l}{} &
  \multicolumn{1}{l}{} & 
  \multicolumn{5}{|c|}{\#(Solved instances)} &
  \multicolumn{5}{c|}{Computing time (s)} \\ \hline
\multicolumn{1}{|l|}{} &
\multicolumn{1}{l|}{} &
  \multicolumn{1}{l|}{} &
  \multicolumn{1}{l}{} &
  \multicolumn{2}{|c|}{$\eqref{Bi-master}$ + B\&C} &
  \multicolumn{2}{c|}{$\eqref{Li-master}$ + B\&C} &
  \multicolumn{1}{l|}{} &
  \multicolumn{2}{c|}{$\eqref{Bi-master}$ + B\&C} &
  \multicolumn{2}{c|}{$\eqref{Li-master}$ + B\&C} \\ \cline{5-8} \cline{10-13}
  $k$&
$v_{i0}$ &
  $\alpha,\alpha_k$ &
  \multicolumn{1}{l|}{CONIC} &
  \multicolumn{1}{l}{OA} &
  \multicolumn{1}{l|}{OA+SC} &
  \multicolumn{1}{l}{OA} &
  \multicolumn{1}{l|}{OA+SC} &
  CONIC &
  OA &
  OA+SC &
  OA &
  OA+SC \\ \hline
\multirow{10}{*}{5}&
\multirow{5}{*}{10} 
& 5,2    & \textbf{5} & \textbf{5} & \textbf{5} & \textbf{5} & \textbf{5} & 3.99  & \textbf{2.38} & 2.44 & 3.47 & 3.65 \\
&& 10,4   & \textbf{5} & \textbf{5} & \textbf{5} & \textbf{5} & \textbf{5} & 7.95  & \textbf{4.34} & 4.60 & 4.55 & 4.81 \\
&& 25,10  & \textbf{5} & \textbf{5} & \textbf{5} & \textbf{5} & \textbf{5} & 22.99 & 8.45 & 8.02 & \textbf{6.43} & 7.18 \\
&& 50,20  & \textbf{5} & \textbf{5} & \textbf{5} & \textbf{5} & \textbf{5} & 1.20  & 1.13 & 1.11 & \textbf{1.02} & 1.05 \\
&& 100,40 & \textbf{5} & \textbf{5} & \textbf{5} & \textbf{5} & \textbf{5} & 0.86  & 0.95 & 0.98 & \textbf{0.80} & 0.81 \\ \cline{2-13}
&\multirow{5}{*}{20} 
& 5,2    & \textbf{5} & \textbf{5} & \textbf{5} & \textbf{5} & \textbf{5} & 3.76  & 1.84 & \textbf{1.79} & 2.49 & 2.30 \\
&& 10,4   & \textbf{5} & \textbf{5} & \textbf{5} & \textbf{5} & \textbf{5} & 5.70  & 3.31 & 3.30 & \textbf{3.09} & 3.26 \\
&& 25,10  & \textbf{5} & \textbf{5} & \textbf{5} & \textbf{5} & \textbf{5} & 25.98 & 7.22 & 8.89 & \textbf{6.26} & 6.78 \\
&& 50,20  & \textbf{5} & \textbf{5} & \textbf{5} & \textbf{5} & \textbf{5} & 8.52  & 2.07 & \textbf{1.97} & 2.06 & 2.14 \\
&& 100,40 & \textbf{5} & \textbf{5} & \textbf{5} & \textbf{5} & \textbf{5} & 0.71  & 1.14 & 1.19 & \textbf{0.70} & 0.71 \\ \hline
\multirow{8}{*}{10}&
\multirow{4}{*}{10} 
& 5,2     & \textbf{5} & \textbf{5} & \textbf{5} & \textbf{5} & \textbf{5} & 18.24  & 9.43   & 10.07  & \textbf{7.24}  & 8.73  \\
&& 10,4    & \textbf{5} & \textbf{5} & \textbf{5} & \textbf{5} & \textbf{5} & 122.25 & 102.03 & 98.63  & \textbf{31.95} & 33.13 \\
&& 25,10   & \textbf{5} & \textbf{5} & \textbf{5} & \textbf{5} & \textbf{5} & 64.34  & 76.39  & 68.01  & \textbf{57.04} & 65.42 \\
&& 50,20 & \textbf{5} & \textbf{5} & \textbf{5} & \textbf{5} & \textbf{5} & 0.91   & 1.00   & 0.98   & 0.88  & \textbf{0.85}  \\ \cline{2-13}
&\multirow{4}{*}{20} 
& 5,2     & \textbf{5} & \textbf{5} & \textbf{5} & \textbf{5} & \textbf{5} & 10.77  & 6.73   & 6.50   & 5.16  & \textbf{5.00}  \\
&& 10,4    & \textbf{5} & \textbf{5} & \textbf{5} & \textbf{5} & \textbf{5} & 46.29  & 17.41  & 19.56  & 15.83 & \textbf{15.82} \\
&& 25,10   & \textbf{5} & \textbf{5} & \textbf{5} & \textbf{5} & \textbf{5} & 162.59 & 122.12 & 101.43 & \textbf{70.46} & 99.31 \\
&& 50,20 & \textbf{5} & \textbf{5} & \textbf{5} & \textbf{5} & \textbf{5} & 7.21   & 3.03   & 2.87   & \textbf{2.54}  & 2.60  \\ \hline
\multicolumn{3}{|r|}{Summary} & \textbf{90} & \textbf{90} & \textbf{90} & \textbf{90} & \textbf{90} & 28.57   & 20.61   & 19.02   & \textbf{12.78}  & 14.56  \\ \hline
\end{tabular}%
}
\caption{Results for $\texttt{Sen\_200\_20\_5}$ and $\texttt{200\_20\_10}$}
\label{tab:200_20_k}
\end{table}

Tables \ref{tab:200_20_k} and \ref{tab:100_10_k} present the comparison results of the CONIC and B\&C methods on the \texttt{100\_100\_5}, \texttt{100\_100\_10}, \texttt{Sen\_200\_20\_5}, and \texttt{200\_20\_10} datasets. Both methods perform well on these datasets when all instances are solved to optimality. Specifically, for the \texttt{Sen\_200\_20\_5} and \texttt{200\_20\_10} datasets, the B\&C method using OA within its procedure is the fastest for 11 out of 18 instance sets. On average, this approach is about 2.23 times faster than the highest runtime, which comes from the CONIC approach, across all instances. For the \texttt{100\_100\_5} and \texttt{100\_100\_10} datasets, the B\&C method with \eqref{Bi-master} and OA is the most efficient in terms of average runtime, followed by B\&C with \eqref{Bi-master} and OA+SC. Overall, the B\&C method with \eqref{Bi-master} and OA cuts is approximately 9 to 15.89 times faster than both the CONIC approach and the B\&C method with \eqref{Li-master}, achieving the best average computation times for 15 out of 18 instance sets.

\begin{table}[htb]
\centering
\resizebox{0.95\textwidth}{!}{%
\begin{tabular}{|c|c|c|c|cc|cc|r|rr|rr|}
\cline{4-13}
\multicolumn{1}{l}{} &
\multicolumn{1}{l}{} &
  \multicolumn{1}{l}{} & 
  \multicolumn{5}{|c|}{\#(Solved instances)} &
  \multicolumn{5}{c|}{Computing time (s)} \\ \hline
\multicolumn{1}{|l|}{} &
\multicolumn{1}{l|}{} &
  \multicolumn{1}{l|}{} &
  \multicolumn{1}{l}{} &
  \multicolumn{2}{|c|}{$\eqref{Bi-master}$ + B\&C} &
  \multicolumn{2}{c|}{$\eqref{Li-master}$ + B\&C} &
  \multicolumn{1}{l|}{} &
  \multicolumn{2}{c|}{$\eqref{Bi-master}$ + B\&C} &
  \multicolumn{2}{c|}{$\eqref{Li-master}$ + B\&C} \\ \cline{5-8} \cline{10-13}
  $k$&
$v_{i0}$ &
  $\alpha,\alpha_k$ &
  \multicolumn{1}{l|}{CONIC} &
  \multicolumn{1}{l}{OA} &
  \multicolumn{1}{l|}{OA+SC} &
  \multicolumn{1}{l}{OA} &
  \multicolumn{1}{l|}{OA+SC} &
  CONIC &
  OA &
  OA+SC &
  OA &
  OA+SC \\ \hline
  \multirow{8}{*}{5}&
\multirow{4}{*}{1} 
& 5,2   & \textbf{5} & \textbf{5} & \textbf{5} & \textbf{5} & \textbf{5} & 58.33  & \textbf{3.78}  & 4.04  & 47.46  & 42.86  \\
&& 10,4  & \textbf{5} & \textbf{5} & \textbf{5} & \textbf{5} & \textbf{5} & 177.19 & \textbf{13.18} & 15.09 & 135.60 & 112.90 \\
&& 25,10 & \textbf{5} & \textbf{5} & \textbf{5} & \textbf{5} & \textbf{5} & 46.11  & \textbf{1.47}  & 1.69  & 11.15  & 11.03  \\
&& 50,20 & \textbf{5} & \textbf{5} & \textbf{5} & \textbf{5} & \textbf{5} & 3.79   & \textbf{0.35}  & 0.38  & 2.11   & 2.26   \\ \cline{2-13}
&\multirow{4}{*}{2} 
& 5,2   & \textbf{5} & \textbf{5} & \textbf{5} & \textbf{5} & \textbf{5} & 36.31  & \textbf{3.25}  & 3.54  & 29.93  & 26.50  \\
&& 10,4  & \textbf{5} & \textbf{5} & \textbf{5} & \textbf{5} & \textbf{5} & 124.59 & \textbf{8.53}  & 8.96  & 69.41  & 95.93  \\
&& 25,10 & \textbf{5} & \textbf{5} & \textbf{5} & \textbf{5} & \textbf{5} & 97.72  & \textbf{2.46}  & 2.57  & 23.29  & 24.07  \\
&& 50,20 & \textbf{5} & \textbf{5} & \textbf{5} & \textbf{5} & \textbf{5} & 1.57   & \textbf{0.21}  & 0.23  & 1.03   & 1.03   \\ \hline
 \multirow{6}{*}{10}&
\multirow{3}{*}{1} 
& 5,2   & \textbf{5} & \textbf{5} & \textbf{5} & \textbf{5} & \textbf{5} & 119.58 & \textbf{8.62} & \textbf{8.62} & 88.73 & 84.48 \\
&& 10,4  & \textbf{5} & \textbf{5} & \textbf{5} & \textbf{5} & \textbf{5} & 96.16  & \textbf{5.41} & 5.67 & 49.49 & 45.46 \\
&& 25,10 & \textbf{5} & \textbf{5} & \textbf{5} & \textbf{5} & \textbf{5} & 20.15  & 1.56 & \textbf{1.38} & 6.50  & 6.93  \\ \cline{2-13}
&\multirow{3}{*}{2} 
& 5,2   & \textbf{5} & \textbf{5} & \textbf{5} & \textbf{5} & \textbf{5} & 66.69  & \textbf{5.61} & 6.70 & 44.66 & 47.83 \\
&& 10,4  & \textbf{5} & \textbf{5} & \textbf{5} & \textbf{5} & \textbf{5} & 79.60  & \textbf{4.10} & 4.55 & 33.03 & 35.62 \\
&& 25,10 & \textbf{5} & \textbf{5} & \textbf{5} & \textbf{5} & \textbf{5} & 35.70  & 2.07 & \textbf{1.82} & 10.76 & 10.49 \\ \hline
\multicolumn{3}{|r|}{Summary} & \textbf{70} & \textbf{70} & \textbf{70} & \textbf{70} & \textbf{70} & 68.82   & \textbf{4.33}   & 4.66   & 39.51  & 39.10  \\ \hline
\end{tabular}%
}
\caption{Results for $\texttt{100\_100\_5}$ and $\texttt{100\_100\_10}$}
\label{tab:100_10_k}
\end{table}

\begin{table}[htb]
\centering
\resizebox{0.95\textwidth}{!}{%
\begin{tabular}{|c|c|c|c|cc|cc|r|rr|rr|}
\cline{4-13}
\multicolumn{1}{l}{} &
\multicolumn{1}{l}{} &
  \multicolumn{1}{l}{} & 
  \multicolumn{5}{|c|}{\#(Solved instances)} &
  \multicolumn{5}{c|}{Computing time (s)} \\ \hline
\multicolumn{1}{|l|}{} &
\multicolumn{1}{l|}{} &
  \multicolumn{1}{l|}{} &
  \multicolumn{1}{l}{} &
  \multicolumn{2}{|c|}{$\eqref{Bi-master}$ + B\&C} &
  \multicolumn{2}{c|}{$\eqref{Li-master}$ + B\&C} &
  \multicolumn{1}{l|}{} &
  \multicolumn{2}{c|}{$\eqref{Bi-master}$ + B\&C} &
  \multicolumn{2}{c|}{$\eqref{Li-master}$ + B\&C} \\ \cline{5-8} \cline{10-13}
  $k$&
$v_{i0}$ &
  $\alpha,\alpha_k$ &
  \multicolumn{1}{l|}{CONIC} &
  \multicolumn{1}{l}{OA} &
  \multicolumn{1}{l|}{OA+SC} &
  \multicolumn{1}{l}{OA} &
  \multicolumn{1}{l|}{OA+SC} &
  CONIC &
  OA &
  OA+SC &
  OA &
  OA+SC \\ \hline
\multirow{10}{*}{5}&
\multirow{5}{*}{10} 
& 10,4    & \textbf{5} & \textbf{5} & \textbf{5} & \textbf{5} & \textbf{5} & 170.94 & 111.42 & \textbf{98.78}  & 135.39 & 138.71 \\
&& 25,10   & \textbf{5} & \textbf{5} & \textbf{5} & \textbf{5} & \textbf{5} & 445.96 & \textbf{219.92} & 227.15 & 237.33 & 229.27 \\
&& 50,20   & \textbf{5} & \textbf{5} & \textbf{5} & \textbf{5} & \textbf{5} & 121.58 & 86.02  & 83.81  & \textbf{80.99}  & 87.07  \\
&& 125,50  & \textbf{5} & \textbf{5} & \textbf{5} & \textbf{5} & \textbf{5} & 19.63  & 16.71  & 17.32  & 11.38  & \textbf{9.16}   \\
&& 250,100 & \textbf{5} & \textbf{5} & \textbf{5} & \textbf{5} & \textbf{5} & 25.42  & 15.87  & 15.81  & 9.50   & \textbf{9.27}   \\ \cline{2-13}
&\multirow{5}{*}{20} 
& 10,4    & \textbf{5} & \textbf{5} & \textbf{5} & \textbf{5} & \textbf{5} & 234.75 & 87.50  & 94.49  & \textbf{69.95}  & 76.60  \\
&& 25,10   & \textbf{5} & \textbf{5} & \textbf{5} & \textbf{5} & \textbf{5} & 253.45 & 138.81 & \textbf{133.55} & 147.29 & 148.65 \\
&& 50,20   & \textbf{5} & \textbf{5} & \textbf{5} & \textbf{5} & \textbf{5} & 164.60 & \textbf{102.88} & 113.25 & 111.89 & 122.49 \\
&& 125,50  & \textbf{5} & \textbf{5} & \textbf{5} & \textbf{5} & \textbf{5} & 19.64  & 19.84  & 21.33  & \textbf{7.11}   & 9.27   \\
&& 250,100 & \textbf{5} & \textbf{5} & \textbf{5} & \textbf{5} & \textbf{5} & 21.25  & 27.57  & 27.56  & \textbf{8.51}   & \textbf{8.51}   \\ \hline

\multirow{10}{*}{10}&
\multirow{5}{*}{10} 
& 5,2    & \textbf{5} & \textbf{5} & \textbf{5} & \textbf{5} & \textbf{5} & 213.13  & 146.71  & \textbf{141.02} & 169.10  & 173.70  \\
&& 10,4   & \textbf{5} & \textbf{5} & \textbf{5} & \textbf{5} & \textbf{5} & 964.36  & 1590.73 & 1800.49 & \textbf{381.67}  & 410.77  \\
&& 25,10  & 0 & \textbf{1} & \textbf{1} & 0 & 0  & -       &      3278.09   & \textbf{3120.50} & -       &    -     \\
&& 50,20  & 1 & 4 & \textbf{5} & \textbf{5} & \textbf{5} & 3078.21 & 1181.92 & 1230.10 & \textbf{982.29}  & 1559.07    \\
&& 125,50 & \textbf{5} & \textbf{5} & \textbf{5} & \textbf{5} & \textbf{5} & 21.17   & 18.25   & 25.60 & \textbf{9.95}    & 10.28    \\ \cline{2-13}
&\multirow{5}{*}{20} 
& 5,2    & \textbf{5} & \textbf{5} & \textbf{5} & \textbf{5} & \textbf{5} & 193.94  & 91.79   & \textbf{85.95} & 94.17   & 90.91   \\
&& 10,4   & \textbf{5} & \textbf{5} & \textbf{5} & \textbf{5} & \textbf{5} & 450.39  & 306.11  & 309.66 & \textbf{242.82}  & 269.76  \\
&& 25,10  & 0 & \textbf{1}  & \textbf{1} & 0 &  0 & -       &      \textbf{3470.64}   & 3474.20 & -       &   -      \\
&& 50,20  & 0 & \textbf{3} & 2 & 1 & 1 & -       & \textbf{2719.53} & 2857.56 & 2933.72 & 3317.22 \\
&& 125,50 & \textbf{5} & \textbf{5} &  \textbf{5}& \textbf{5} & \textbf{5} & 15.23   & 26.61   & 22.30 & \textbf{6.68}    & 7.04    \\ \hline
\multicolumn{3}{|r|}{Summary} & 81 & \textbf{89} & \textbf{89} & 86 & 86 & 243.89   & 389.54   & 388.21   & \textbf{191.44}  & 233.95  \\ \hline
\end{tabular}%
}
\caption{Results for $\texttt{500\_50\_5}$ and $\texttt{500\_50\_10}$}
\label{tab:500_50_k}
\end{table}

\begin{table}[htb]
\centering
\resizebox{0.95\textwidth}{!}{%
\begin{tabular}{|c|c|c|c|cc|cc|r|rr|rr|}
\cline{4-13}
\multicolumn{1}{l}{} &
\multicolumn{1}{l}{} &
  \multicolumn{1}{l}{} & 
  \multicolumn{5}{|c|}{\#(Solved instances)} &
  \multicolumn{5}{c|}{Computing time (s)} \\ \hline
\multicolumn{1}{|l|}{} &
\multicolumn{1}{l|}{} &
  \multicolumn{1}{l|}{} &
  \multicolumn{1}{l}{} &
  \multicolumn{2}{|c|}{$\eqref{Bi-master}$ + B\&C} &
  \multicolumn{2}{c|}{$\eqref{Li-master}$ + B\&C} &
  \multicolumn{1}{l|}{} &
  \multicolumn{2}{c|}{$\eqref{Bi-master}$ + B\&C} &
  \multicolumn{2}{c|}{$\eqref{Li-master}$ + B\&C} \\ \cline{5-8} \cline{10-13}
  $k$&
$v_{i0}$ &
  $\alpha,\alpha_k$ &
  \multicolumn{1}{l|}{CONIC} &
  \multicolumn{1}{l}{OA} &
  \multicolumn{1}{l|}{OA+SC} &
  \multicolumn{1}{l}{OA} &
  \multicolumn{1}{l|}{OA+SC} &
  CONIC &
  OA &
  OA+SC &
  OA &
  OA+SC \\ \hline
\multirow{10}{*}{5}&
\multirow{5}{*}{10} & 25,10   & 2 & 3 & 3 & \textbf{4} & \textbf{4} & 1974.92 & 1291.77 & 1438.75 & 1136.26 & \textbf{994.03} \\
&& 50,20   & 3 & 3 & \textbf{4} & \textbf{4} & \textbf{4} & 1011.68 & 609.82 & 1067.69 & 661.68  & \textbf{542.27} \\
&& 125,50  & 4 & \textbf{5} & \textbf{5} & \textbf{5} & \textbf{5} & 122.70  & 143.86 & 143.76 & 125.04  & \textbf{108.85} \\
&& 250,100 & \textbf{5} & \textbf{5} & \textbf{5} & \textbf{5} & \textbf{5} & 134.52  & 106.68 & 110.95 & \textbf{68.34}   & 68.37 \\
&& 500,200 & \textbf{5} & \textbf{5} & \textbf{5} & \textbf{5} & \textbf{5} & 170.70  & 94.35 & 95.45 & \textbf{73.05}   & 81.12 \\ \cline{2-13}
&\multirow{5}{*}{20} & 25,10   & 3 & \textbf{5} & \textbf{5} & \textbf{5} & \textbf{5} & 1542.77 & 1280.33 & 1197.57 & 739.52  & \textbf{576.33} \\
&& 50,20   & 2 & 3 & 3 & \textbf{4} & \textbf{4} & 1750.03 & 1155.64 &  866.86 & 878.51  & \textbf{781.73} \\
&& 125,50  & 3 & 4 & 3 & \textbf{5} & \textbf{5} & 889.12  & 1145.85 & \textbf{373.08} & 925.43  & 826.23  \\
&& 250,100 & \textbf{5} & \textbf{5} & \textbf{5} & \textbf{5} & \textbf{5} & 217.07  & 95.40 & 102.95 & \textbf{43.43}   & 43.65 \\
&& 500,200 & \textbf{5} & \textbf{5} & \textbf{5} & \textbf{5} & \textbf{5} & 110.65  & 87.82 & 94.28 & \textbf{45.24}   & 46.13 \\ \hline

\multirow{10}{*}{10}&
\multirow{5}{*}{10} & 10,4    & 1 & 3 & 3 & \textbf{4} & \textbf{4} & 3400.84 & 2475.91 & 2343.05 & \textbf{1481.40} & 1563.57 \\
&& 25,10   & 0 & 0 & 0 & 0 & 0 & -       & -       & - & -       & - \\
&& 50,20   & 0 & 0 & 0 & 0 & 0 & -  & -    & - & -  & - \\
&& 125,50  & \textbf{5} & \textbf{5} & \textbf{5} & \textbf{5} & \textbf{5} & 127.51  & 106.98  & 105.00 & 68.34   & \textbf{57.38} \\
&& 250,100 & \textbf{5} & \textbf{5} & \textbf{5} & \textbf{5} & \textbf{5} & 194.29  & 92.33   & 89.66 & \textbf{73.05}   & 76.22 \\ \cline{2-13}
&\multirow{5}{*}{20} & 10,4    & 3 & \textbf{5} & \textbf{5} & \textbf{5} & \textbf{5} & 2593.68 & 2115.63 & 2177.90 & 1034.69 & \textbf{940.20} \\
&& 25,10   & 0 & 0 & 0 & 0 & 0 & -       & -       & - & -       & - \\
&& 50,20   & 0 & 0 & 0 & 0 & 0 & -       & -       & - & -       & - \\
&& 125,50  & 0 & 0 & 0 & \textbf{5} & \textbf{5} & -       & -       & - & \textbf{1433.94} & 1565.53 \\
&& 250,100 & \textbf{5} & \textbf{5} & \textbf{5} & \textbf{5} & \textbf{5} & 110.65  & 92.40   & 105.41 & \textbf{45.38}   & 48.81 \\ \hline
\multicolumn{3}{|r|}{Summary} & 56 & 66 & 66 & \textbf{76} & \textbf{76} &  620.64  &  640.33 &  612.89  & \textbf{489.61}  &  496.32 \\ \hline
\end{tabular}%
}
\caption{Results for $\texttt{1000\_100\_5}$ and $\texttt{1000\_100\_10}$}
\label{tab:1000_100_k}
\end{table}

We also present comparison results for larger instances (i.e., larger numbers of customer classes and products) in Tables \ref{tab:500_50_k} and \ref{tab:1000_100_k}. These results are consistent with earlier experiments. The B\&C approaches outperform the CONIC method in terms of both the number of optimal solutions found and computing time. For the 500-product dataset, the best approach is \eqref{Bi-master} with 89 out of 100 optimal solutions found, followed by \eqref{Li-master} with 86 out of 100, and the CONIC method with 81 out of 100. The two fastest settings of the B\&C algorithm are those based on the linear master problem \eqref{Li-master}. Although the B\&C method based on \eqref{Bi-master} requires higher runtimes compared to those based on the linear master problem, it is important to note that the computation time is calculated based on instances solved to optimality, and the B\&C with \eqref{Bi-master} solves the most instances, including difficult and time-consuming ones. In Table \ref{tab:1000_100_k}, the B\&C based on \eqref{Li-master} significantly outperforms other methods by finding 76 out of 100 optimal solutions, compared to 66 out of 100 for the second-best formulation \eqref{prob:BiCP} and 56 out of 100 for the CONIC method. Furthermore, the \eqref{Li-master} with OA and OA+SC embedded in the B\&C procedure are the two fastest approaches. The average runtime of the OA-based version is better than the other, and both are about 20\% lower than that of the CONIC method.

\subsection{Experiment Results for Instances of Large Numbers of Customer Classes}
In this section, we present comparison results for large-scale instances involving a large number of customer classes. Such instances are particularly relevant when, for example, the choice probabilities under the mixed-logit model are expressed as expectations over random variables. In these cases, sample average approximation (SAA) may be needed to approximate the objective function, requiring a large number of samples to achieve the desired accuracy.  To evaluate the performance of our approaches on these large-scale instances, we generated four new instance sets, named \texttt{100\_1000}, \texttt{200\_2000}, \texttt{200\_4000}, and \texttt{100\_5000}. These sets contain individual customer numbers ranging from 1000 to 5000, and the number of products is either 100 or 200. These instances contain general capacity constraints of the form \(\sum_{j \in [m]} \beta_j x_j \leq \alpha\), where \(\beta_j\) is randomly generated from a \(U[0,1]\) distribution, and \(\alpha\) is chosen from the set \(\{10, 20, 50\}\). The utilities are drawn from a \(U[0,1]\) distribution, and the revenues are generated from a \(U[1,3]\) distribution.


As discussed earlier, when the number of customer classes is large, adding cuts for each customer class, as in \eqref{Bi-master} and \eqref{Li-master}, can cause the master problem to grow quickly, resulting in long computing times. The segment-based approach discussed in Section [] addresses this by dividing the entire set of customer classes into disjoint subsets and adding cuts for each group of customers. As shown in \cite{MaiLodi2020_OA}, increasing the number of customer groups makes the master problem grow faster in terms of the number of constraints, leading to longer solving times. However, it also helps reduce the number of iterations needed for convergence. Therefore, it is crucial to select an appropriate group size to balance the growth rate of the master problem and the number of iterations required for convergence. In our experiments, we chose 20 customer groups, as this provided the best overall performance (see  Figure \ref{fig:multicut} in the appendix for more analyses).


\begin{table}[!h]
\centering
\resizebox{0.95\textwidth}{!}{%
\begin{tabular}{ccc|ccccc|rrrrr|}
\cline{4-13}
\multicolumn{1}{l}{} &
  \multicolumn{1}{l}{} &
  \multicolumn{1}{l|}{} &
  \multicolumn{5}{c|}{\#(Solved instances)} &
  \multicolumn{5}{c|}{Computing time (s)} \\ \hline
\multicolumn{1}{|c|}{} &
  \multicolumn{1}{c|}{} &
   &
  \multicolumn{1}{l|}{} &
  \multicolumn{2}{c|}{B\&C} &
  \multicolumn{2}{c|}{CP} &
  \multicolumn{1}{l|}{} &
  \multicolumn{2}{c|}{B\&C} &
  \multicolumn{2}{c|}{CP} \\ \cline{5-8} \cline{10-13} 
\multicolumn{1}{|c|}{Dataset} &
  \multicolumn{1}{l|}{$v_{i0}$} &
  $\alpha$ &
  \multicolumn{1}{l|}{CONIC} &
  \multicolumn{1}{l}{\eqref{Bi-master-SB}} &
  \multicolumn{1}{l|}{\eqref{Li-master-SB}} &
  \multicolumn{1}{l}{\eqref{Bi-master-SB}} &
  \multicolumn{1}{l|}{\eqref{Li-master-SB}} &
  \multicolumn{1}{r|}{CONIC} &
  \eqref{Bi-master-SB} &
  \multicolumn{1}{r|}{\eqref{Li-master}} &
  \eqref{Bi-master-SB} &
  \eqref{Li-master-SB} \\ \hline
\multicolumn{1}{|c|}{\multirow{6}{*}{$\texttt{100\_1000}$}} &
  \multicolumn{1}{c|}{\multirow{3}{*}{1}} &
  10 &
  \multicolumn{1}{c|}{\textbf{5}} &
  \textbf{5} &
  \multicolumn{1}{c|}{\textbf{5}} &
  \textbf{5} &
  \textbf{5} &
  \multicolumn{1}{r|}{301.06} &
  169.68 &
  \multicolumn{1}{r|}{408.98} &
  \textbf{6.71} &
  6.93 \\
\multicolumn{1}{|c|}{} &
  \multicolumn{1}{c|}{} &
  20 &
  \multicolumn{1}{c|}{\textbf{5}} &
  \textbf{5} &
  \multicolumn{1}{c|}{\textbf{5}} &
  \textbf{5} &
  \textbf{5} &
  \multicolumn{1}{r|}{146.43} &
  76.20 &
  \multicolumn{1}{r|}{81.96} &
  \textbf{4.19} &
  13.88 \\
\multicolumn{1}{|c|}{} &
  \multicolumn{1}{c|}{} &
  50 &
  \multicolumn{1}{c|}{\textbf{5}} &
  \textbf{5} &
  \multicolumn{1}{c|}{\textbf{5}} &
  \textbf{5} &
  \textbf{5} &
  \multicolumn{1}{r|}{104.55} &
  \textbf{27.64} &
  \multicolumn{1}{r|}{48.28} &
  414.52 &
  667.45 \\ \cline{2-13} 
\multicolumn{1}{|c|}{} &
  \multicolumn{1}{c|}{\multirow{3}{*}{2}} &
  10 &
  \multicolumn{1}{c|}{\textbf{5}} &
  \textbf{5} &
  \multicolumn{1}{c|}{\textbf{5}} &
  \textbf{5} &
  \textbf{5} &
  \multicolumn{1}{r|}{511.27} &
  281.53 &
  \multicolumn{1}{r|}{653.68} &
  \textbf{14.17} &
  14.50 \\
\multicolumn{1}{|c|}{} &
  \multicolumn{1}{c|}{} &
  20 &
  \multicolumn{1}{c|}{\textbf{5}} &
  \textbf{5} &
  \multicolumn{1}{c|}{\textbf{5}} &
  \textbf{5} &
  \textbf{5} &
  \multicolumn{1}{r|}{289.16} &
  111.83 &
  \multicolumn{1}{r|}{205.67} &
  \textbf{4.18} &
  5.05 \\
\multicolumn{1}{|c|}{} &
  \multicolumn{1}{c|}{} &
  50 &
  \multicolumn{1}{c|}{\textbf{5}} &
  \textbf{5} &
  \multicolumn{1}{c|}{\textbf{5}} &
  \textbf{5} &
  \textbf{5} &
  \multicolumn{1}{r|}{104.00} &
  \textbf{28.31} &
  \multicolumn{1}{r|}{35.26} &
  519.02 &
  545.61 \\ \hline
\multicolumn{1}{|c|}{\multirow{6}{*}{$\texttt{200\_2000}$}} &
  \multicolumn{1}{c|}{\multirow{3}{*}{5}} &
  10 &
  \multicolumn{1}{c|}{0} &
  4 &
  \multicolumn{1}{c|}{2} &
  \textbf{5} &
  \textbf{5} &
  \multicolumn{1}{r|}{-} &
  1884.29 &
  \multicolumn{1}{r|}{1243.95} &
  \textbf{60.82} &
  65.63 \\
\multicolumn{1}{|c|}{} &
  \multicolumn{1}{c|}{} &
  20 &
  \multicolumn{1}{c|}{0} &
  \textbf{5} &
  \multicolumn{1}{c|}{\textbf{5}} &
  \textbf{5} &
  \textbf{5} &
  \multicolumn{1}{r|}{-} &
  1060.01 &
  \multicolumn{1}{r|}{1839.42} &
  \textbf{20.03} &
  22.30 \\
\multicolumn{1}{|c|}{} &
  \multicolumn{1}{c|}{} &
  50 &
  \multicolumn{1}{c|}{2} &
  \textbf{5} &
  \multicolumn{1}{c|}{\textbf{5}} &
  \textbf{5} &
  \textbf{5} &
  \multicolumn{1}{r|}{3075.02} &
  499.13 &
  \multicolumn{1}{r|}{537.40} &
  26.34 &
  \textbf{24.81} \\ \cline{2-13} 
\multicolumn{1}{|c|}{} &
  \multicolumn{1}{c|}{\multirow{3}{*}{10}} &
  10 &
  \multicolumn{1}{c|}{0} &
  \textbf{5} &
  \multicolumn{1}{c|}{\textbf{5}} &
  \textbf{5} &
  \textbf{5} &
  \multicolumn{1}{r|}{-} &
  1863.66 &
  \multicolumn{1}{r|}{1646.67} &
  \textbf{45.77} &
  54.10 \\
\multicolumn{1}{|c|}{} &
  \multicolumn{1}{c|}{} &
  20 &
  \multicolumn{1}{c|}{0} &
  \textbf{5} &
  \multicolumn{1}{c|}{\textbf{5}} &
  \textbf{5} &
  \textbf{5} &
  \multicolumn{1}{r|}{-} &
  881.53 &
  \multicolumn{1}{r|}{1100.94} &
  \textbf{19.13} &
  19.39 \\
\multicolumn{1}{|c|}{} &
  \multicolumn{1}{c|}{} &
  50 &
  \multicolumn{1}{c|}{0} &
  \textbf{5} &
  \multicolumn{1}{c|}{\textbf{5}} &
  \textbf{5} &
  \textbf{5} &
  \multicolumn{1}{r|}{-} &
  408.05 &
  \multicolumn{1}{r|}{525.99} &
  23.89 &
  \textbf{22.18} \\ \hline
\multicolumn{1}{|c|}{\multirow{6}{*}{$\texttt{200\_4000}$}} &
  \multicolumn{1}{c|}{\multirow{3}{*}{5}} &
  10 &
  \multicolumn{1}{c|}{0} &
  1 &
  \multicolumn{1}{c|}{1} &
  \textbf{5} &
  \textbf{5} &
  \multicolumn{1}{r|}{-} &
  2888.00 &
  \multicolumn{1}{r|}{2393.56} &
  \textbf{117.01} &
  132.87 \\
\multicolumn{1}{|c|}{} &
  \multicolumn{1}{c|}{} &
  20 &
  \multicolumn{1}{c|}{0} &
  2 &
  \multicolumn{1}{c|}{2} &
  \textbf{5} &
  \textbf{5} &
  \multicolumn{1}{r|}{-} &
  2203.68 &
  \multicolumn{1}{r|}{1501.80} &
  \textbf{93.47} &
  127.05 \\
\multicolumn{1}{|c|}{} &
  \multicolumn{1}{c|}{} &
  50 &
  \multicolumn{1}{c|}{0} &
  \textbf{5} &
  \multicolumn{1}{c|}{\textbf{5}} &
  \textbf{5} &
  \textbf{5} &
  \multicolumn{1}{r|}{-} &
  1435.87 &
  \multicolumn{1}{r|}{1045.85} &
  \textbf{33.16} &
  46.62 \\ \cline{2-13} 
\multicolumn{1}{|c|}{} &
  \multicolumn{1}{c|}{\multirow{3}{*}{10}} &
  10 &
  \multicolumn{1}{c|}{0} &
  3 &
  \multicolumn{1}{c|}{2} &
  \textbf{5} &
  \textbf{5} &
  \multicolumn{1}{r|}{-} &
  2338.95 &
  \multicolumn{1}{r|}{2191.28} &
  \textbf{66.27} &
  71.75 \\
\multicolumn{1}{|c|}{} &
  \multicolumn{1}{c|}{} &
  20 &
  \multicolumn{1}{c|}{0} &
  3 &
  \multicolumn{1}{c|}{3} &
  \textbf{5} &
  \textbf{5} &
  \multicolumn{1}{r|}{-} &
  2535.00 &
  \multicolumn{1}{r|}{2739.70} &
  \textbf{100.30} &
  121.97 \\
\multicolumn{1}{|c|}{} &
  \multicolumn{1}{c|}{} &
  50 &
  \multicolumn{1}{c|}{0} &
  \textbf{5} &
  \multicolumn{1}{c|}{3} &
  \textbf{5} &
  \textbf{5} &
  \multicolumn{1}{r|}{-} &
  1574.44 &
  \multicolumn{1}{r|}{1040.74} &
  \textbf{34.01} &
  36.98 \\ \hline
\multicolumn{1}{|c|}{\multirow{6}{*}{$\texttt{100\_5000}$}} &
  \multicolumn{1}{c|}{\multirow{3}{*}{1}} &
  10 &
  \multicolumn{1}{c|}{2} &
  \textbf{5} &
  \multicolumn{1}{c|}{2} &
  \textbf{5} &
  \textbf{5} &
  \multicolumn{1}{r|}{2465.84} &
  1688.00 &
  \multicolumn{1}{r|}{2898.66} &
  \textbf{6.42} &
  6.47 \\
\multicolumn{1}{|c|}{} &
  \multicolumn{1}{c|}{} &
  20 &
  \multicolumn{1}{c|}{\textbf{5}} &
  \textbf{5} &
  \multicolumn{1}{c|}{\textbf{5}} &
  \textbf{5} &
  \textbf{5} &
  \multicolumn{1}{r|}{1603.59} &
  1288.66 &
  \multicolumn{1}{r|}{936.60} &
  \textbf{24.22} &
  30.42 \\
\multicolumn{1}{|c|}{} &
  \multicolumn{1}{c|}{} &
  50 &
  \multicolumn{1}{c|}{3} &
  \textbf{5} &
  \multicolumn{1}{c|}{\textbf{5}} &
  \textbf{5} &
  3 &
  \multicolumn{1}{r|}{1798.74} &
  \textbf{364.26} &
  \multicolumn{1}{r|}{1145.03} &
  1290.35 &
  1606.71 \\ \cline{2-13} 
\multicolumn{1}{|c|}{} &
  \multicolumn{1}{c|}{\multirow{3}{*}{2}} &
  10 &
  \multicolumn{1}{c|}{0} &
  \textbf{5} &
  \multicolumn{1}{c|}{2} &
  \textbf{5} &
  \textbf{5} &
  \multicolumn{1}{r|}{-} &
  2295.70 &
  \multicolumn{1}{r|}{2995.34} &
  8.10 &
  \textbf{7.74} \\
\multicolumn{1}{|c|}{} &
  \multicolumn{1}{c|}{} &
  20 &
  \multicolumn{1}{c|}{2} &
  \textbf{5} &
  \multicolumn{1}{c|}{\textbf{5}} &
  \textbf{5} &
  \textbf{5} &
  \multicolumn{1}{r|}{3090.49} &
  2020.56 &
  \multicolumn{1}{r|}{1248.85} &
  \textbf{18.68} &
  25.30 \\
\multicolumn{1}{|c|}{} &
  \multicolumn{1}{c|}{} &
  50 &
  \multicolumn{1}{c|}{\textbf{5}} &
  \textbf{5} &
  \multicolumn{1}{c|}{\textbf{5}} &
  \textbf{5} &
  3 &
  \multicolumn{1}{r|}{1532.11} &
  \textbf{312.29} &
  \multicolumn{1}{r|}{449.97} &
  1099.63 &
  621.86 \\ \hline
  \multicolumn{3}{|r|}{Summary} &  \multicolumn{1}{c|}{59} & 108 & \multicolumn{1}{c|}{99} & \textbf{120} & 116 & \multicolumn{1}{r|}{721.10}  & 1031.23  & \multicolumn{1}{r|}{971.65} & 168.77  &  \textbf{146.82}\\ \hline
\end{tabular}%
}
\caption{Numerical results for instances of large numbers of customer classes.}
\label{tab:large_customer}
\end{table}

Table \ref{tab:large_customer} presents our comparison results for both the B\&C and CP approaches, based on the two segment-based master programs \eqref{Bi-master-SB} and \eqref{Li-master-SB}. We utilize both OA and SC cuts for our methods and include the CONIC methods for comparison. The best performance is achieved by \eqref{Bi-master-SB} embedded in the CP algorithm, which returns optimal solutions for all instances, followed by the CP with the linear master program \eqref{Li-master-SB} (116/120). The B\&C methods with \eqref{Bi-master-SB} and \eqref{Li-master-SB} provide 108/120 and 99/120 optimal solutions, respectively, while the CONIC method solves only 59 out of 120 instances. In terms of computing time, the CP with \eqref{Bi-master-SB} is the fastest method for 17 out of 24 sets of instances, being approximately 4.27 times faster than the CONIC and 6.11 times faster than the B\&C with \eqref{Bi-master-SB}, which provides the highest number of optimal solutions among the B\&C methods. The experiment demonstrates that the CP algorithm outperforms other approaches (CONIC and B\&C) in handling instances with a large number of customers.

\begin{figure}[!h]
    \centering
    \includegraphics[width=\linewidth]{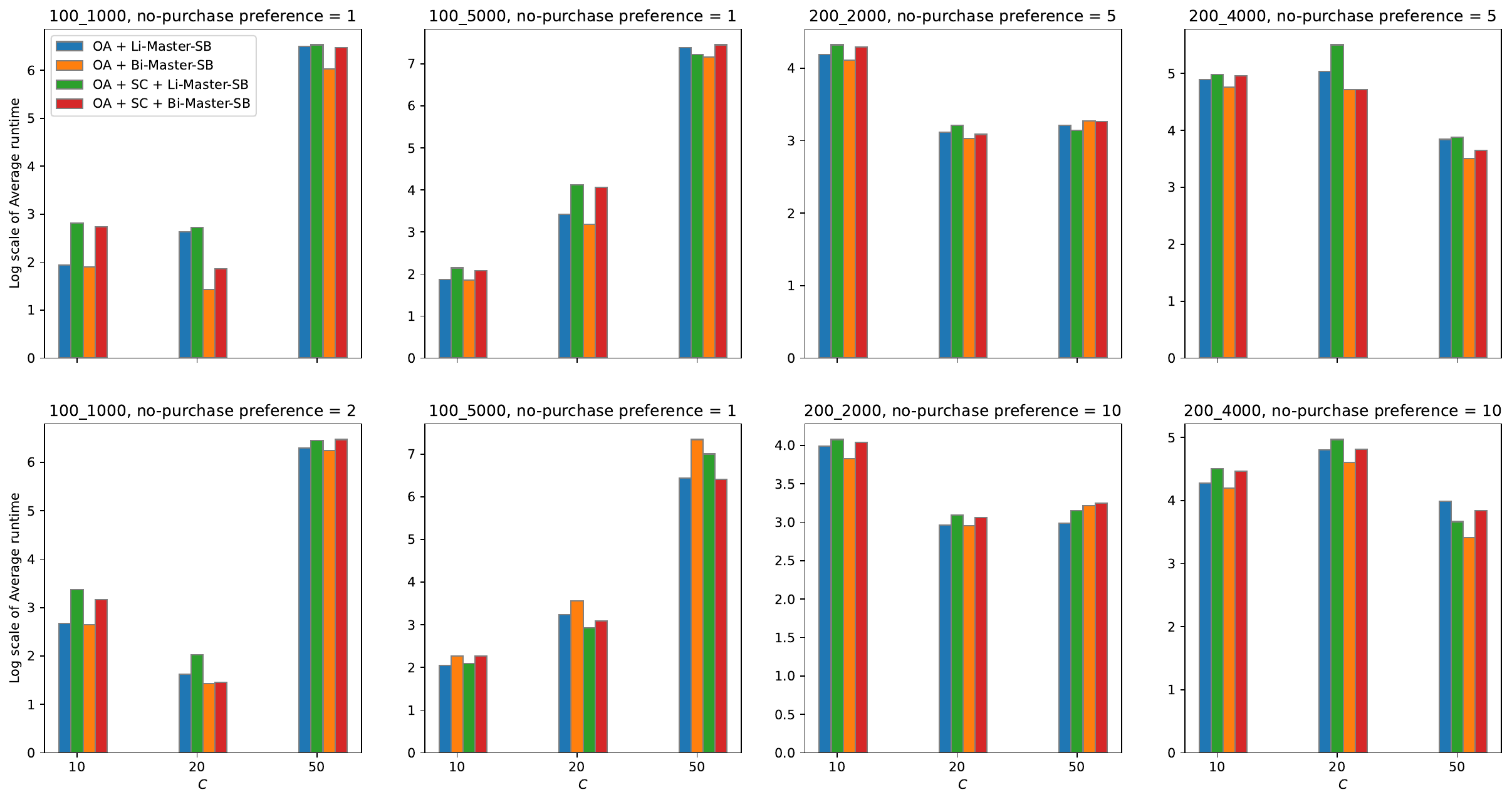}
    \caption{Comparison of computation times for segment-based CP algorithms with OA and SC cuts.}
    \label{fig:oa_oasc}
\end{figure}

Given that segment-based CP approaches offer the best performance when solving instances of large customer classes, we extend our analysis by examining the impact of OA and SC cuts on the overall performance of the CP algorithm. Figure \ref{fig:oa_oasc} visualizes the average runtime of the CP algorithm for four different settings: CP under \eqref{Bi-master-SB} with OA cuts, CP under \eqref{Bi-master-SB} with OA+SC cuts, CP under \eqref{Li-master-SB} with OA cuts, and CP under \eqref{Li-master-SB} with OA+SC cuts. Each bar represents the average runtime's logarithmic scale for each setting over 120 large instances.

Overall, the OA+SC versions (green and red bars) run slightly slower than the models with only OA cuts (blue and orange bars) in almost all cases, except for some instances with capacity equal to 50 solved using \eqref{Li-master-SB}. This phenomenon also occurs in the case of the B\&C with \eqref{Bi-master-SB}/\eqref{Li-master-SB} as shown in Tables \ref{tab:200_20_k} - \ref{tab:1000_100_k}.


\section{Conclusion}\label{sec:conclusion}
In this paper, we have addressed the capacitated assortment problem under the MMNL discrete choice model. Unlike previous methods that primarily rely on MILP or CONIC reformulations, we have explored nonlinear components of objective functions that are convex and super-modular. We developed two types of cuts—outer-approximation and super-modular cuts—to create new CP and B\&C approaches, which iteratively add such cuts to a master problem (linear or bilinear). Additionally, we theoretically demonstrate that super-modularity can be leveraged to devise a simple polynomial-time Greedy Heuristic algorithm that yields solutions with semi-constant factor guarantees. Experiments conducted on instances of various sizes establish the superiority of our CP and B\&C approaches, which can solve instances with a very large number of products and customer classes to optimality, outperforming the state-of-the-art CONIC approach.

\bibliographystyle{apacite}
\bibliography{refs}

\clearpage

\appendix
\section*{APPENDIX}

\section{Additional Numerical Results}
\subsection{Comparison of B\&C and CP Approaches}
As mentioned in the main paper, the B\&C consistently outperforms the CP approaches for small- and medium-sized instances when the number of customer classes is not too large. Below, we provide some numerical comparisons to illustrate this. Table \ref{tab:new_formulations} presents a comparison between the B\&C with \eqref{Li-master}—the best approach as shown in Tables \ref{tab:capacity_only} - \ref{tab:1000_100_k}—and the CP methods under \eqref{Bi-master} and \eqref{Li-master}, using some standard benchmark instances from \cite{Sen2018}. It can be observed that the CP performs poorly on instances with large capacity, while the B\&C with \eqref{Li-master} solves all instances to optimality.

\begin{table}[htb]
\centering
\resizebox{0.95\textwidth}{!}{%
\begin{tabular}{|c|c|c|c|cc|c|cc|} 
\cline{4-9}
\multicolumn{1}{l}{}                         & \multicolumn{1}{l}{}          & \multicolumn{1}{l|}{} & \multicolumn{3}{c|}{\#(Solved instances)}                    & \multicolumn{3}{c|}{Computing time (s)}                             \\ 
\hline
                                             &                               &                       & B\&C + \eqref{Li-master}        & \multicolumn{2}{c|}{CP}                       & BC+ \eqref{Li-master}          & \multicolumn{2}{c|}{CP}                            \\ 
\cline{5-6}\cline{8-9}
Dataset                                      & \multicolumn{1}{l|}{$v_{i0}$} & $\alpha(\alpha_k)$    &              & \multicolumn{1}{l}{\eqref{Bi-master}} & \multicolumn{1}{l|}{\eqref{Li-master}}  &                & \multicolumn{1}{l}{\eqref{Bi-master}} & \multicolumn{1}{l|}{\eqref{Li-master}}       \\ 
\hline
\multirow{10}{*}{$\texttt{Sen\_200\_20}$}    & \multirow{5}{*}{5}            & 10                    & \textbf{5}   & \textbf{5}           & \textbf{5}             & 4.44           & \textbf{1.06}        & 1.21                        \\
                                             &                               & 20                    & \textbf{5}   & \textbf{5}           & \textbf{5}             & 3.12           & \textbf{1.51}        & 2.08                        \\
                                             &                               & 50                    & \textbf{5}   & \textbf{5}           & \textbf{5}             & \textbf{1.13}  & 10.91                & 21.77                       \\
                                             &                               & 100                   & \textbf{5}   & 0                    & 0                      & \textbf{0.53}  & -                    & -                           \\
                                             &                               & 200                   & \textbf{5}   & 0                    & 0                      & \textbf{0.56}  & -                    & -                           \\ 
\cline{2-9}
                                             & \multirow{5}{*}{10}           & 10                    & \textbf{5}   & \textbf{5}           & \textbf{5}             & 4.11           & \textbf{0.65}        & 0.66                        \\
                                             &                               & 20                    & \textbf{5}   & \textbf{5}           & \textbf{5}             & 3.25           & \textbf{0.90}        & 1.16                        \\
                                             &                               & 50                    & \textbf{5}   & \textbf{5}           & \textbf{5}             & \textbf{1.96}  & 9.81                 & 14.55                       \\
                                             &                               & 100                   & \textbf{5}   & 0                    & 0                      & \textbf{0.54}  & -                    & -                           \\
                                             &                               & 200                   & \textbf{5}   & 0                    & 0                      & \textbf{0.70}  & -                    & -                           \\ 
\hline
\multirow{8}{*}{$\texttt{Sen\_100\_100}$}    & \multirow{4}{*}{1}            & 10                    & \textbf{5}   & \textbf{5}           & \textbf{5}             & \textbf{30.36} & 124.18               & 181.51                      \\
                                             &                               & 20                    & \textbf{5}   & \textbf{4}           & \textbf{4}             & \textbf{50.95} & 928.38               & 1042.14                     \\
                                             &                               & 50                    & \textbf{5}   & 0                    & 0                      & \textbf{4.31}  & -                    & -                           \\
                                             &                               & 100                   & \textbf{5}   & 0                    & 0                      & \textbf{2.22}  & -                    & -                           \\ 
\cline{2-9}
                                             & \multirow{4}{*}{2}            & 10                    & \textbf{5}   & \textbf{5}           & \textbf{5}             & \textbf{16.55} & 22.32                & 25.26                       \\
                                             &                               & 20                    & \textbf{5}   & \textbf{5}           & \textbf{5}             & \textbf{27.95} & 181.49               & 282.79                      \\
                                             &                               & 50                    & \textbf{5}   & \textbf{2}           & 1                      & \textbf{8.24}  & 2285.74              & 2858.20                     \\
                                             &                               & 100                   & \textbf{5}   & \textbf{5}           & 4                      & \textbf{1.03}  & 678.32               & 373.84                      \\ 
\hline
\multirow{10}{*}{$\texttt{Sen\_200\_20\_5}$} & \multirow{5}{*}{10}           & 5(2)                  & \textbf{5}   & \textbf{5}           & \textbf{5}             & 3.47           & \textbf{0.66}        & 0.67                        \\
                                             &                               & 10(4)                 & \textbf{5}   & \textbf{5}           & \textbf{5}             & \textbf{4.55}  & \textbf{2.18}        & 3.54                        \\
                                             &                               & 25(10)                & \textbf{5}   & \textbf{5}           & \textbf{5}             & \textbf{6.43}  & 33.41                & 289.79                      \\
                                             &                               & 50(20)                & \textbf{5}   & 0                    & 0                      & \textbf{1.02}  & -                    & -                           \\
                                             &                               & 100(40)               & \textbf{5}   & 0                    & 0                      & \textbf{0.80}  & -                    & -                           \\ 
\cline{2-9}
                                             & \multirow{5}{*}{20}           & 5(2)                  & \textbf{5}   & \textbf{5}           & \textbf{5}             & 2.49           & 0.55                 & 0.53                        \\
                                             &                               & 10(4)                 & \textbf{5}   & \textbf{5}           & \textbf{5}             & 3.09           & \textbf{1.84}        & 2.66                        \\
                                             &                               & 25(10)                & \textbf{5}   & \textbf{5}           & \textbf{5}             & \textbf{6.26}  & 13.41                & 39.34                       \\
                                             &                               & 50(20)                & \textbf{5}   & 0                    & 0                      & \textbf{2.06}  & -                    & -                           \\
                                             &                               & 100(40)               & \textbf{5}   & 0                    & 0                      & \textbf{0.70}  & -                    & -                           \\ 
\hline
\multicolumn{3}{|r|}{Summary}                                                                        & \textbf{140} & 86                   & \multicolumn{1}{c}{84} & \textbf{6.89}  & 159.31               & \multicolumn{1}{r|}{153.09}  \\
\hline
\end{tabular}}
\caption{Comparison of B\&C and CP for small- and medium-sized instances. }
\label{tab:new_formulations}
\end{table}

\subsection{Impact of the Number of Cuts for the Segment-based Approach}

In this section, we explore the impact of the number of customer groups ($L$) on the performance of the CP and B\&C methods under the segment-based master problem \eqref{Bi-master-SB}, using instances with a large number of customer classes described in Section 5.3. Figure \ref{fig:bcvscp} reports the average computation times of the CP and B\&C methods as $L$ varies in $\{1, 5, 10, 20, 50, 100, 200\}$. The figure shows that as the number of cuts increases from 20 to 200, the runtime experiences a slight upward trend, with the CP method achieving the best performance at $L = 20$. In contrast, the computation times of the B\&C method are unstable when $L$ increases from 1 to 50, and then rapidly increase again as $L$ goes from 50 to 200. Furthermore, Figure \ref{fig:bcvscp} indicates that the CP algorithms (red and green lines) significantly outperform and run more stably than the B\&C methods (blue and orange lines) as $L$ increases.


\begin{figure}[htb]
    \centering
    \includegraphics[width=0.95\linewidth]{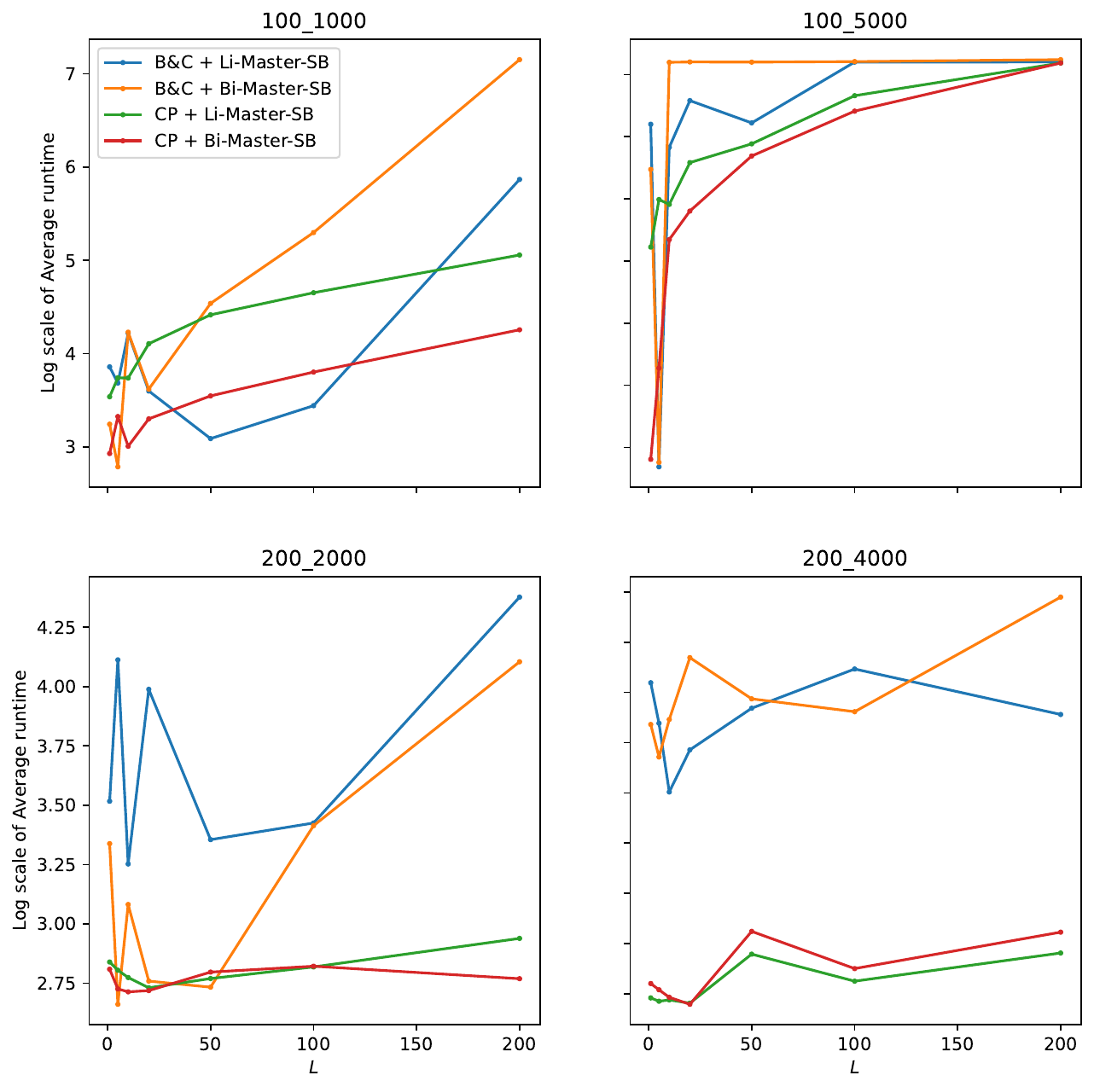}
    \caption{Computation time comparison with varying number of customer groups $L$.}
    \label{fig:bcvscp}
\end{figure}

\end{document}